\documentclass[10pt]{article}

\usepackage{import}
\usepackage{amsmath,amsfonts,amsthm,amssymb,mathrsfs}
\usepackage[T1]{fontenc}
\usepackage[utf8]{inputenc}
\usepackage{natbib} 
\usepackage[colorlinks]{hyperref}
\usepackage{xcolor}
\usepackage{fullpage}
\usepackage{mathtools}
\usepackage{authblk}

   \newcommand{\R}{\mathbb{R}}

\renewcommand{\t}{^{\top}}
\newcommand{\inv}{^{-1}}
\newcommand{\E}{\mathbb{E}}
\newcommand{\p}{\mathbb{P}}
\newcommand{\ipq}{\mathbf{I}_{p,q}}

\newtheorem{theorem}{Theorem}
\newtheorem{corollary}{Corollary}

\theoremstyle{definition} 
\newtheorem{definition}{Definition}
\newtheorem{remark}{Remark}

\bibliographystyle{plainnat_JA}
\setcitestyle{round}
\hypersetup{%
  colorlinks=true,
  linkcolor=blue,
  citecolor = blue,
  urlcolor = blue
}
\begin{document}
\title{On Two Distinct Sources of Nonidentifiability in Latent Position Random Graph Models}
\author[1]{Joshua Agterberg}
\author[2]{Minh Tang}
\author[1]{Carey E. Priebe}
\affil[1]{Department of Applied Mathematics and Statistics, Johns Hopkins University}
\affil[2]{Department of Statistics, North Carolina State University}

\renewcommand\Authands{, and }
\maketitle

%
%
%
%


\begin{abstract}
Two separate and distinct sources of nonidentifiability 
arise naturally in the context of latent position random graph models,
though neither are unique to this setting.
In this paper we define and examine these two nonidentifiabilities,
dubbed {\it subspace nonidentifiability} and {\it model-based nonidentifiability},
in the context of random graph inference.  We give examples where each type of nonidentifiability comes into play, and we show how in certain settings one need worry about one or the other type of nonidentifiability. Then, we characterize the limit for model-based nonidentifiability both with and without subspace nonidentifiability.  We further obtain additional limiting results for covariances and $U$-statistics of stochastic block models and generalized random dot product graphs.
\end{abstract}



\section{Introduction}

The statistical analysis of network data is important for fields such as neuroscience \citep{vogelstein_graph_2012}, sociology \citep{hoff_latent_2002}, and physics \citep{newman_finding_2004,bickel_nonparametric_2009}.  Recently, network data have become ubiquitous in the the modern data-science landscape, and a large literature on statistical methods for analyzing these data has developed. Popular statistical models for conditionally independent random graphs include, but are not limited to, the stochastic block model \citep{holland_stochastic_1983}, the random dot product graph \citep{young_random_2007,athreya_statistical_2017}, and graphons \citep{lovasz_large_2012,diaconis_graph_2007}. 
Both the stochastic block model and the random dot product graph are examples of \emph{latent position random graphs} \citep{hoff_latent_2002}, a graph model that is motivated by the idea that individual nodes have latent positions whose values determine their propensity to form connections.

The purpose of this manuscript is to explain a curious phenomenon that arises in latent position random graph settings. Loosely speaking, for latent position random graphs, there are two types of nonidentifiability that work together in different ways.  \emph{Subspace nonidentifiability} has to do with the basis corresponding to a specific subspace, and \emph{model nonidentifiability} has to do with the representation of the latent variables.  

Although we defer relevant rigorous definitions to Section \ref{sec2}, we will begin with a brief explanation of  these two types of nonidentifibility.  Suppose we have some latent position random graph model, where the latent positions $X_1, ..., X_n$ are drawn iid from some fixed distribution $F_X$ with support $\Omega \subset \R^d$, and then the undirected adjacency matrix $\mathbf{A}$, conditional on the latent positions, has its entries drawn independently according to $a_{ij} \sim Bernoulli(\kappa({X_i, X_j}))$, where the kernel $\kappa: \Omega \times \Omega \to [0,1]$ is some (symmetric) function of the latent positions.  

The first nonidentifiability, subspace monidentifiability, has to do with
the basis of the support of $F_X$. We note that this is a
basis-representation problem, which implies that we are able to choose
the basis for the eigenspace corresponding to the non-unique eigenvalues only up to orthogonal transformation. We will assume we either observe the
generalized Gram matrix directly (see Section \ref{sec2}), or a noisy version thereof;
hence, one may be interested in using the eigendecomposition to estimate
the vectors themselves. The eigendecomposition, then, with high probability, 
is determined only up to unique eigenvalues.

The other source of nonidentifiability comes from the model; that is,
transformations to the inputs under which $\kappa$ is invariant. For example,
if $\kappa(X_i, X_j ) = X_i\t X_j$, then $\kappa(\mathbf{W}X_i , \mathbf{W}X_j) = \kappa(X_i, X_j)$ for all $d \times d$ orthogonal matrices $\mathbf{W}$; in this case, the second source of nonidentifability also
takes the form of orthogonal matrices. As we shall see, it need not be true
that this second form of nonidentifiability is necessarily orthogonal.

Throughout, we shall examine several different inference tasks related to random graphs, explained below:
\begin{enumerate}
    \item Spectral graph clustering,
    \item Two-graph hypothesis testing, and
    \item Subspace estimation.
 \end{enumerate}

The \emph{spectral graph clustering} problem 
is, given a graph $\mathbf{A}$, to cluster the nodes of the graph together in some fashion, where the latent positions are thought of as coming from some mixture distribution; see \citet{von_luxburg_tutorial_2007} and the references therein.  Methods typically consist of clustering algorithms applied to the scaled eigenvectors of either the Laplacian or Adjacency matrices; these are the Laplacian Spectral Embedding and the Adjacency Spectral Embedding respectively. See \citet{priebe_two-truths_2019} and \citet{cape_spectral_2019} for an investigation over when one method performs better than another.  In graph clustering, if the practitioner is using some spectral method, there is no nonidentifiability of inferential consequence assuming the practitioner uses Gaussian mixture modeling to cluster the nodes of the graph; see \citet{rubin-delanchy_statistical_2020} for an explanation of why this is so; existing theory ensures that  the latent positions are appropriately centered together according to their clusters for $n$ sufficiently large.

The \emph{two-graph hypothesis testing} problem is, given two graph adjacency matrices $\mathbf{A_1}$ and $\mathbf{A_2}$ of sizes $n$ and $m$ respectively, with corresponding latent positions $X_1, ..., X_n$ and $Y_1, ..., Y_m$, to test whether $F_X = F_Y \circ \mathbf{Q}$, for any $\mathbf{Q}$ in the model-based nonidentifiability.  Two-graph hypothesis  testing has a wide literature, including \citet{chen_network_2018,gangrade_efficient_2019,ghoshdastidar_two-sample_2017,ghoshdastidar_two-sample_2019,li_two-sample_2018,levin_central_2019,tang_nonparametric_2017,tang_semiparametric_2017}.  The problem we consider here is most similar to that in \citet{tang_nonparametric_2017}, in which the distributions $F_X$ and $F_Y$ need not be of any parametric form.  In addition, the second moment matrix $\E XX\t$ (times a deterministic matrix) is not assumed to have distinct eigenvalues.  In this setting, both types of nonidentifiability may be of inferential consequence.



The \emph{subspace estimation} problem, is, given a graph $\mathbf{A}$, to estimate the $d$-dimensional subspace corresponding to the top $d$ dimensions of the low-rank matrix $\E\mathbf{A} = \mathbf{P}$ and $\mathbf{A} = \mathbf{P} + \mathbf{E}$ with $\mathbf{E}$ random and zero-mean.  Subspace estimation is of theoretical interest in its own right; recently, there has been a renewed effort in quantifying subspace estimation in terms of more refined error bounds, such as  \citet{abbe_entrywise_2017,cai_rate-optimal_2018,cape_two--infinity_2019,cape_signal-plus-noise_2019,damle_uniform_2019, eldridge_unperturbed_2018,fan_$ell_infty$_2016,lei_unified_2019,mao_estimating_2019}. Essentially, this problem has to do with the choice of basis for the approximately low-rank matrix; if there are repeated eigenvalues in the matrix $\mathbf{P}$, then the choice of the eigenvectors of $\mathbf{P}$ will only be up to an orthogonal transformation in the basis corresponding to the repeated eigenvalues.  However, since we are assuming the matrix $\mathbf{P}$ to be fixed, there is no model-based identifiability, since our goal is to estimate a linear-algebraic property of the matrix $\mathbf{P}$ and not properties of any of the vectors $X_i$ themselves.  Hence, the only nonidentifiability of inferential consequence will be subspace nonidentifiability.

We will discuss the examples above at length, but it is important to keep in mind that model-based nonidentifability stems from the generation process of the matrix $\mathbf{P}$ and subspace nonidentifiability stems from the linear algebraic properties of the matrix $\mathbf{P}$.  Hence, a necessary condition to have to contend with model-based nonidentifiability is that the practitioner is interested in properties of the distribution $F$ such as in the two-graph hypothesis testing problem.  As we will see, model-based nonidentifiability can be controlled by averaging over the randomness in $F$, whereas subspace nonidentifiability cannot ever be controlled.

\subsection{Notation}

We use bold capital letters for matrices, bold lowercase letters for fixed vectors, and capital letters for random variables. We use the notation $F_X$ for the distribution of a generic random variable $X$.  If we observe data $X_1, ..., X_n \in \R^d$, we let the matrix $\mathbf{X}$ be the $n \times d$ matrix with its $i$-th row denoted as $X_i\t$.  We use $\mathbf{I}_r$ to define the $r \times r$ identity matrix.  We define the matrix $\ipq := \text{diag}(\mathbf{I}_p, -\mathbf{I}_q)$.  We use $||\cdot||$ to denote the spectral norm on matrices and the usual Euclidean norm on vectors, $||\cdot||_F$ as the Frobenius norm on matrices, and $||\cdot||_{2,\infty}$ as $\ell_2 \to \ell_{\infty}$ norm on matrices, which is equal to the maximum  Euclidean row norm.

\section{Setting}
\label{sec2}
To elucidate our phenomenon in detail, we begin by describing the general setting in which we will be conducting the analysis.  Suppose we draw $X_1, ..., X_n \overset{iid}{\sim} F_X$ where each $X_i$  takes values in $\R^d$.  

Define the (generalized) symmetric gram matrix $\mathbf{P}$ via 
\begin{align*}
    \mathbf{P}_{ij} = \kappa(X_i ,X_j).
\end{align*}
Throughout we will suppose we observe either the (generalized, symmetric) gram matrix $\mathbf{P}$  or a noisy version of $\mathbf{P}$, represented as $\mathbf{A = P+E}$ for some symmetric noise matrix $\mathbf{E}$. 

We first provide examples in which the above setting arises.  In particular, we will see that the above setting is natural for many random graph models, but also encompasses other situations as well.   We start with a sufficiently general model to encompass all latent position random graphs.

\begin{definition}
Let $X_i \overset{iid}{\sim} F_X$ for some distribution on $\R^d$.  Suppose we have a known kernel $\kappa: \R^d \times \R^d \to [0,1]$.  We say $\mathbf{(A, X)}$ is an instantiation of a \emph{latent position random graph} on $n$ vertices if $\mathbf{A}$ is symmetric, and, conditional on the $X_i$'s, the entries $\mathbf{A}_{ij}$ are independent for $i \leq j$ and $\p( \mathbf{A}_{ij} = 1) \sim Bernoulli(\kappa(X_i, X_j))$.  
\end{definition}

The above definition is a generalization of the popular \emph{stochastic blockmodel}, whose definition is given below.



\begin{definition}
Suppose $X_i \overset{iid}{\sim} F_X$ where $F_X$ is a mixture of $K$ point masses with distinct points $\nu_1, ..., \nu_K$.  Define $\mathbf{B}$ to be the matrix such that $\mathbf{B}_{ij} = \nu_i\t \ipq \nu_K$, and suppose $\mathbf{B}_{ij} \in [0,1]$.  Let $\pi$ be the $n$-dimensional assignment vector; that is, $X_i = \nu_{\pi(i)}$.  A random graph is an instantiation of a \emph{stochastic blockmodel} if, conditional on the $X_i$'s, the entries $\mathbf{A}_{ij}$ are independent for $i\leq j$ and $\p(\mathbf{A}_{ij} = 1) \sim Bernoulli( \mathbf{B}_{\pi(i), \pi(j)})$ with $\mathbf{A}_{ij} = \mathbf{A}_{ji}$.  
\end{definition}

The above two definitions show that $\E \mathbf{A | X} = \mathbf{P}$.  Hence, we observe $\mathbf{A = P + E}$, the kernel is either $\kappa$ or $X_i \t \ipq X_j$, and we wish to perform inference about the distribution $F_X$.  Note that the definition of the stochastic blockmodel above is not identical to the one typically given in the literature, although it reduces thereto in the case that the $\mathbf{B}$ matrix is given.  In general, the matrix $\ipq$ is determined by the number of positive/negative eigenvalues of $\mathbf{B}$ and the vectors $X$ come from the spectral decomposition of $\mathbf{B}$.  The reason we give the definition above is that it highlights a specific case of a more general low-rank model, the Generalized Random Dot Product Graph of \citet{rubin-delanchy_statistical_2020}, the definition of which is given below.

\begin{definition}[\citet{rubin-delanchy_statistical_2020}] We say $F_X$ is a \emph{$d-$dimensional generalized inner product distribution} if $F_X$ takes values in $\Omega \subset \R^d$ and if for all $x,y \in \Omega$,
\begin{align*}
    x\t\mathbf{ I}_{p,q} y \in [0,1].
\end{align*}
In this case, we say $(p,q)$ is the signature of $F_X$, where $p+q = d$.
\end{definition}

\begin{definition}[\citet{rubin-delanchy_statistical_2020}] Let $F_X$ be a $d$-dimensional generalized inner product distribution with signature $(p,q)$.  We say a graph adjacency matrix $\mathbf{A}$ is an instantiation of a $d-$dimensional generalized random dot product graph on $n$ vertices if $X_1, ..., X_n$ are distributed iid $F_X$ and $\mathbf{A}$ is symmetric, and, conditional on $\mathbf{X}$, the entries $\mathbf{A}_{ij}$ are independent and satisfy $\mathbf{A}_{ij} \sim Bernoulli(X_i \t \ipq X_j)$ for $i \leq j$, and $\mathbf{A}_{ji} = \mathbf{A}_{ij}$.  We write $\mathbf{(A, X)} \sim GRDPG(n,F_X)$. 
\end{definition}

Once again, the above definition shows that $\E \mathbf{A} | \mathbf{X} := \mathbf{P} = \mathbf{X } \ipq \mathbf{ X\t}$. In the case $p = d$, $q= 0$, the above definition reduces to the random dot product graph (RDPG) as in \citet{athreya_statistical_2017}.  In fact, the GRDPG framework allows one to model other more general models besides the stochastic blockmodel, such as the mixed-membership and degree-corrected blockmodels whose definitions are given below.  Checking that the definitions below coincide with that in the literature is covered in \citet{rubin-delanchy_statistical_2020}.  

\begin{definition}
Let $\nu_1, ..., \nu_K \in \R^d$, and let $\mathcal{C}$ denote their convex hull.  Set $F_X$ as a distribution with support $\mathcal{C}$. We then say $(\mathbf{A, X}) \sim GRDPG(F_X, n)$ is an instantiation of a \emph{mixed-membership stochastic blockmodel}.
\end{definition}

\begin{definition}
Let $\nu_1, ... \nu_K$ be vectors in $\R^d$, and let $\mathcal{C}$ denote their convex hull.  
Let $H$ be a distribution supported on $\mathcal{C}$, and let $G$ be a distribution on $[0,1]$.  

Define $F_X$ as follows.  First, draw $n$ points $h_1, ..., h_n$ independently from $H$, and then independently draw $w_1, ..., w_n$ from $G$.  A realization from $F_X$, $X_i$ is defined then as $X_i := w_i h_i$.  We then say $\mathbf{A, X} \sim GRDPG(F_X,n)$ is an instatiation of a \emph{mixed-membership degree-corrected stochastic blockmodel}.  In the case the distribution $H$ is the distribution with point masses at each $\nu_j$, then we simply say $\mathbf{A,X} \sim GRDPG(F_X,n)$ is an instantiation of a \emph{degree-corrected stochastic blockmodel}.
\end{definition}

\subsection{Model-Based Nonidentifiability}
We are now in a position to explain our first source of nonidentifiability.  Since we are performing analysis on the Gram matrix $\mathbf{P}$ or a perturbed version thereof, there is inherent nonidentifiability in the latent positions $X_1, .. X_n$.  In other words, suppose $\mathbf{Q} : \R^d \to \R^d$ is some transformation under which the kernel function $\kappa$ is invariant; that is, $\kappa( \mathbf{Q} X_i , \mathbf{Q} X_j) = \kappa(X_i,X_j)$ for any fixed $X_i$ and $X_j$.  Then the best one can hope to do is recover the $X_i$'s up to the family of transformations $\mathbf{Q}$.  

Suppose $\kappa$ is the Gaussian kernel $\kappa(x,y) = \exp( -\frac{||x - y||^2}{\sigma^2})$ for some known $\sigma > 0$.  Then the family of transformations $\mathbf{Q}$ include all translations, rotations, and reflections, since $\kappa$ depends only on the relative distances of each of the points.  If, in addition, $\sigma$ is not known, then it is a nuisance parameter, and the family of transformations $\mathbf{Q}$ also include scaling by a constant, since the constant can be absorbed into $\sigma$ to yield the same Gram matrix.  

The argument in the previous paragraph applies to any distance-based kernel $\kappa$.  If $\kappa$ is based only on inner products, then translations will not be included.  If $\kappa$ is not known, then the model-based nonidentifiability can be more exotic.  For example, consider $F_X$ being some distribution in hyperbolic space, and suppose $\kappa$ is a kernel of the form $\kappa(x,y) = f( d_{H}(x,y))$, where $f : \R \to [0,1]$ is some function and $d_{H}$ is the geodesic distance on hyperbolic space.  Then the class of transformations forming the model-based nonidentifiability are the isometries of the distance on hyperbolic space, otherwise known as the Poincar\'e group.  This example makes it clear how the class of transformations forming the model-based nonidentifiability can be nontrivial

In the later sections, we will be primarily focusing on the GRDPG model for our analysis.  Indeed, given any latent position random graph on $n$ vertices with kernel $\kappa$, there exists a GRDPG model on $n$ vertices that approximates the graph arbitrarily well.  Such a result is not surprising, as the GRDPG model includes the stochastic blockmodels as a submodel, and stochastic blockmodels are known to approximate infinite-dimensional graphons arbitrarily well \citep{olhede_network_2014}.  

Consider now the GRDPG model with signature $(p,q)$; as mentioned in the previous subsection, the kernel is given by $X_i\t \ipq X_j$.  Hence, we see that for this choice of kernel the transformations $\mathbf{Q}$ must preserve the bilinear form $X_i\t \ipq X_j$.
%
These matrices are determined by the equation \begin{align*}
    \mathbf{Q } \ipq \mathbf{ Q\t }= \ipq;
\end{align*}
this is the \textit{indefinite orthogonal group}, which we write as $O(p,q)$.  In the case $q = 0$ (the case of random dot product graphs), these matrices $\mathbf{Q}$ are the matrices such that $\mathbf{QQ\t = I}_d$, which is the orthogonal group.  In either case we have nonidentifiability, although it is true that for random dot product graphs the form of the nonidentifiability is much better behaved (the group that preserves the bilinear form is compact, has spectral norm one, etc.).  

The model-based nonidentifiability arises from the fact that the $\mathbf{P}$ matrix is entirely determined by all the pairwise products $X_i\t \ipq X_j$. To generate the matrix $\mathbf{P}$, we need  have knowledge  of only all the pairwise products, and not the vectors $\mathbf{X}$ themselves.  
In general, there may be many forms of model-based nonidentifiability of the $X_i$'s, since we have knowledge of  only the pairwise entries $\kappa(X_i, X_j)$, so any function on $\R^d$ that preserves the kernel also preserves the matrix $\mathbf{P}$.  


One may wonder if the GRDPG model specification with its model-based nonidentifiability makes the statistical inference more difficult while providing no commensurate advantage.  For example, if performing inference on the stochastic blockmodel, one could simply specify the edge-probability generating matrix directly as opposed to the distribution on latent positions.  We argue it does not, and we will make this clear in the following sections.  In particular, we argue that in certain settings using this framework is actually beneficial both from a mathematical and inferential standpoint despite the nonidentifiability.




\subsection{Subspace Nonidentifiability}
Recall from linear algebra that any symmetric matrix $\mathbf{M}$ can be decomposed into its eigendecomposition $\mathbf{U_M \Lambda_M U_M\t}$, where $\mathbf{U_M}$ is an orthogonal matrix and $\mathbf{\Lambda_M}$ is a diagonal matrix whose entries are the eigenvalues of $\mathbf{M}$ ordered from  largest to smallest.  In addition, recall that each column of $\mathbf{U_M}$ is associated to a particular eigenvalue $\lambda$; that is, by orthonormality we can expand out the factorization as $$\mathbf{U_M \Lambda_M U_M\t} = \sum_{i=1}^{\text{dim}(\mathbf{M})} \lambda_i u_i u_i\t $$ where the $u_i$ are the columns of $\mathbf{U_M}$.  

Recall, however, that the $u_i$ need only be orthonormal and span the subspace corresponding to each distinct eigenvalue; in other words, if $\lambda_i$ is simple, then the $u_i$ are determined up to sign, and more generally, if there are repeated eigenvalues, the $u_i$ corresponding to each repeated eigenvalue is determined only up to rotations.  For example, if there are two of the same eigenvalue (suppose the largest eigenvalue), then the matrix $\mathbf{U_M}$ is unique only up to a block orthogonal matrix with a $2 \times 2$ orthogonal matrix in its first block (and if the rest of the eigenvalues are simple, then these are up to sign).  Note, however, that the span of the $u_i$'s corresponding to the same eigenvalue remains unchanged.


Now, how does this linear algebra discussion pertain to our setting here?  Suppose we wish to use spectral methods to analyze either $\mathbf{P}$ or $\mathbf{A}$.  The gram matrix $\mathbf{P}$ is symmetric, so we are free to factorize it as we did $\mathbf{M}$ above.  So we may write the eigendecomposition of $\mathbf{P}$ as $\mathbf{U_P \Lambda_P U_P\t}$.

The nonidentifiability comes from the choice of $\mathbf{U_P}$, which may have any representation up to orthogonal transformations in the repeated eigenvalues. For example, consider a GRDPG with model signature $(p,q)$.  Then, since there are $p$ positive and $q$ negative eigenvalues, it must be true that the orthogonal transformation will (at the very least) have one $p\times p$ block and one $q \times q$ block.  Besides that, very little can be said unless we make the assumption that $\mathbf{P}$ has distinct eigenvalues. 

Note that the above disctussion involved the spectral decomposition of $\mathbf{P}$ which always exists conditional on the observations $X_1, ..., X_n$.  If we drop the conditioning on the $X_i$'s, we can actually say a little more about the eigenvalues of $\mathbf{P}$, or, equivalently $\frac{1}{n}\mathbf{P}$.

Following \citet{koltchinskii_random_2000}, define the integral operator with respect to $F_X$ via 
\begin{align*}
    (T_{\kappa} g)(x) := \int_{\Omega} \kappa(x,y) g(y) dF_X(x)
\end{align*}
and the corresponding empirical operator
\begin{align*}
    (\hat T_{\kappa} g)(x) := \int_{\Omega} \kappa(x,y) g(y) d\hat F_X(x)
\end{align*}
where $\hat F_X$ is the empirical distribution given by the $X_i$'s.  Their Theorem 3.1 allows one to say something about the eigenvalues of the matrix $\frac{1}{n}\mathbf{ P}$ with respect to those of the associated integral operator. 
\begin{theorem}[\citet{koltchinskii_random_2000}]
\label{kgthm}
Suppose $\E \kappa^2(X,Y) < \infty$.  Then for all $i$, $\lambda_i(\hat T_{\kappa}) - \lambda_i(T_{\kappa}) \to 0$ almost surely.
\end{theorem}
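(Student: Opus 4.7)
The plan is to combine a Mercer-type truncation of the kernel with a classical SLLN/Weyl-inequality argument. The main obstacle is that $T_\kappa$ and $\hat T_\kappa$ act on different Hilbert spaces, $L^2(F_X)$ versus $L^2(\hat F_X)$, so direct perturbation theory cannot be applied to their difference; the truncation bypasses this by reducing everything to quantities that can be expressed as empirical moments of fixed integrable functions.

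Under the hypothesis $\E\kappa^2(X,Y) < \infty$, the operator $T_\kappa$ is compact self-adjoint and Hilbert-Schmidt on $L^2(\Omega, F_X)$, with eigenvalues $\lambda_1 \geq \lambda_2 \geq \cdots$ satisfying $\sum_i \lambda_i^2 = \E\kappa^2(X,Y)$ and an orthonormal basis of eigenfunctions $\{\phi_i\}$ (Mercer's theorem). The nonzero spectrum of $\hat T_\kappa$ agrees with that of the scaled Gram matrix $\tfrac{1}{n}\mathbf{K}$ with $\mathbf{K}_{ij} = \kappa(X_i,X_j)$. For fixed $i$ and any $\varepsilon > 0$, I would choose $d \geq i$ with $\sum_{j > d}\lambda_j^2 < \varepsilon$, and decompose $\kappa = \phi + r$, where $\phi(x,y) = \sum_{j=1}^d \lambda_j \phi_j(x)\phi_j(y)$ is rank-$d$ and $\E r^2(X,Y) < \varepsilon$. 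This splits $\mathbf{K} = \mathbf{K}_\phi + \mathbf{K}_r$ entrywise.

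For the rank-$d$ piece, the nonzero eigenvalues of $\tfrac{1}{n}\mathbf{K}_\phi$ coincide with those of the $d \times d$ matrix $\mathrm{diag}(\lambda_1,\ldots,\lambda_d)\mathbf{\Phi}$, where $\mathbf{\Phi}_{jk} = \tfrac{1}{n}\sum_\ell \phi_j(X_\ell)\phi_k(X_\ell)$. Each entry of $\mathbf{\Phi}$ converges a.s.\ to $\delta_{jk}$ by the ordinary SLLN (justified by Cauchy--Schwarz since $\phi_j \in L^2(F_X)$), hence $\mathbf{\Phi} \to \mathbf{I}_d$ and its eigenvalues converge a.s.\ to $\lambda_1,\ldots,\lambda_d$ by continuity. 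For the remainder, the Hilbert--Schmidt norm of $\hat T_r$ on $L^2(\hat F_X)$ satisfies
\begin{align*}
    \|\hat T_r\|_{HS}^2 = \frac{1}{n^2}\sum_{k,\ell} r^2(X_k,X_\ell) \xrightarrow{\text{a.s.}} \E r^2(X,Y) < \varepsilon
\end{align*}
by the SLLN for $U$-statistics of degree two. Applying Weyl's inequality on each of $L^2(\hat F_X)$ and $L^2(F_X)$ to the decompositions $\hat T_\kappa = \hat T_\phi + \hat T_r$ and $T_\kappa = T_\phi + T_r$ then yields $|\lambda_i(\hat T_\kappa) - \lambda_i(T_\kappa)| \leq o(1) + 2\sqrt{\varepsilon}$ almost surely, and letting $\varepsilon \downarrow 0$ gives the desired convergence for every fixed $i$.

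The hardest step is ensuring that Weyl's inequality legitimately transfers between the two Hilbert spaces. The saving feature is that all relevant eigenvalues are eigenvalues of finite $n \times n$ matrices obtained by evaluating the kernel at the sample points, so one never literally needs $L^2(\hat F_X)$ as a separate space; the entire argument can be rephrased as a comparison between the Gram matrix spectrum and the spectrum of a deterministic integral operator, with Mercer truncation providing the bridge. A secondary concern is controlling indices with $i$ comparable to $d$, but this is handled by choosing $d$ after $i$ and $\varepsilon$.
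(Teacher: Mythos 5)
The paper does not actually prove this statement: Theorem~\ref{kgthm} is imported verbatim from \citet{koltchinskii_random_2000}, so there is no internal proof to compare against. Your argument is, however, essentially the same strategy used in that reference --- finite-rank truncation of the kernel, a law of large numbers for the finite-rank block, and Hilbert--Schmidt control of the tail, with Weyl's inequality applied separately on $L^2(F_X)$ and $L^2(\hat F_X)$ --- and the overall structure is sound. Your observation that each application of Weyl's inequality lives entirely within one Hilbert space, so no cross-space perturbation bound is needed, is exactly the right resolution of the apparent obstacle.

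Two points need attention. First, the quantity $\|\hat T_r\|_{HS}^2 = n^{-2}\sum_{k,\ell} r^2(X_k,X_\ell)$ contains the diagonal terms $n^{-2}\sum_k r^2(X_k,X_k)$, and these are \emph{not} controlled by the hypothesis $\E \kappa^2(X,Y) < \infty$ alone: the diagonal is a null set for $F_X\times F_X$ when $F_X$ is nonatomic, so $\kappa(x,x)$ is not even pinned down by the $L^2$ equivalence class of $\kappa$, let alone integrable. This is precisely why \citet{koltchinskii_random_2000} state their Theorem~3.1 for the hollowed matrix with entries $n^{-1}\kappa(X_i,X_j)\,[i\neq j]$. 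In the setting of this paper the gap closes, because $\kappa$ takes values in $[0,1]$: writing $S_k := \sum_{j\le d}|\lambda_j|\phi_j(X_k)^2$, one has $|r(X_k,X_k)|\le 1+S_k$ with $\E S_k = \sum_{j\le d}|\lambda_j|<\infty$, so $n^{-2}\sum_k r^2(X_k,X_k)\le 2/n + 2(\max_k S_k/n)(n^{-1}\sum_k S_k)\to 0$ almost surely. You should either add this step or restrict to the hollowed matrix. Second, two smaller issues of bookkeeping: the kernel here is generally indefinite, so the eigenvalues are signed and ``$\lambda_i$'' requires an indexing convention (e.g., positive and negative branches ordered separately, or the rearrangement metric $\delta_2$ of the original reference); and the expansion $\kappa=\sum_j\lambda_j\phi_j\otimes\phi_j$ you need holds only in $L^2(F_X\times F_X)$ and follows from the spectral theorem for self-adjoint Hilbert--Schmidt operators, not from Mercer's theorem, which requires continuity and positive semidefiniteness that are not assumed here.
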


In other words, their theorem says that all the eigenvalues of the operator $\hat T_{\kappa}$ converge to the eigenvalues of $T_{\kappa}$.  In addition, it is clear from construction that the eigenvalues of $\hat T_{\kappa}$ are the eigenvalues of the matrix $\frac{1}{n} \mathbf{P}$.  Hence, unconditional on the $X_i$'s, if the corresponding integral operator has distinct eigenvalues, then the eigenvalues of $\mathbf{P}$ will be distinct for $n$ sufficiently large.  

In addition, if the eigenvalues of $T_{\kappa}$ are not distinct, then the eigenvalues of $\mathbf{P}$ will not be distinct (or, rather, will have a small gap) for $n$ sufficiently large, and hence the corresponding eigenspace will be  determined only up to block-orthogonal transformations in the repeated eigenvalues.  Recall that if the random graph is a stochastic blockmodel then $F_X$ is a mixture of point masses, so  takes on only $K$ distinct values, meaning $\mathbf{P}$ has at most $K$ nonzero eigenvalues (as one would expect).  

In the low-rank setting, note that the nonzero eigenvalues of $\mathbf{P = X} \ipq \mathbf{X\t}$ are the same as the the nonzero eigenvalues of $\mathbf{X\t X}\ipq$.  Hence $\frac{1}{n}\mathbf{P}$ has the same (nonzero) eigenvalues as $\frac{1}{n}\mathbf{X\t X} \ipq$, which has the same eigenvalues as $\E XX\t \ipq$ in the limit by the law of large numbers.  Therefore, the lack of subspace nonidentifiability is asymptotically equivalent to the matrix $\E XX\t \ipq$ having distinct eigenvalues.

Now, suppose we are in the noisy setting where we observe $\mathbf{A = P + E}$.  Then assuming the noise matrix $\mathbf{E}$ is ``well-behaved" (i.e. has small spectral norm with high probability), the eigenvectors of $\mathbf{A}$ look approximately like those of $\mathbf{P}$.  In fact, the focus of several recent papers \citep{abbe_entrywise_2017,cai_rate-optimal_2018,cape_two--infinity_2019,cape_signal-plus-noise_2019,damle_uniform_2019,eldridge_unperturbed_2018,fan_$ell_infty$_2016,lei_unified_2019} is on developing quantitative bounds in this exact setting.  

\section{The Interplay of These Two Forms of Nonidentifiability}
Henceforth, we will be focusing only on the GRDPG model, since any latent position graph can be approximated arbitrarily well by a GRDPG model for $d$ sufficiently large.  For additional details, see \citet{rubin-delanchy_statistical_2020,tang_universally_2013}.  

A natural choice for the $X_i$'s given the matrix $\mathbf{P}$ is to decompose $\mathbf{P}$ and use the $n \times d$ matrix $\mathbf{U_P |\Lambda_P|}^{1/2}$.  Recall that the matrix $\mathbf{U_P}$ is defined only  up to the repeated eigenvalues.  In addition, the model-based nonidentifiability implies that we can estimate $\mathbf{X}$  only up to an indefinite orthogonal transformation.  In other words, multiplying $\mathbf{U_P |\Lambda_P|}^{1/2}$ by any indefinite orthogonal matrix $\mathbf{Q}$ would be an equivalent estimate.  

Now, suppose we observe $\mathbf{A}$; define $$\mathbf{\hat X} := \mathbf{U_A |\Lambda_A|}^{1/2}$$ as the adjacency spectral embedding of $\mathbf{A}$, where $\mathbf{\Lambda_A}$ is the diagonal matrix consisting of the top $d$ (in magnitude) eigenvalues of $\mathbf{A}$.  The consistency and asymptotic normality of this estimator is presented in \citet{rubin-delanchy_statistical_2020}.




Now, suppose we want to say quantitatively how close $\mathbf{\hat X}$ is to $\mathbf{X}$.  But, first, let's consider how close $\mathbf{\hat X}$ is to $\mathbf{U_P |\Lambda_P|}^{1/2}.$  Should they even be close?   The answer, of course, is yes (with some assumptions), but only up to \textit{subspace} nonidentifiability; we haven't passed back to the model-based nonidentifiability yet as we are purely considering the unique (up to basis representation) eigendecomposition of $\mathbf{P}$ (after scaling).

To say something quantitative about how close $\mathbf{\hat X}$ is to $\mathbf{X}$, we need to introduce a new matrix $\mathbf{Q_X}$.  We can define the matrix $\mathbf{Q_X}$ such that $\mathbf{U_P |\Lambda_P|^{1/2}Q_X = X}$. Then, if $\mathbf{\hat X}$ is close to $\mathbf{U_P |\Lambda_P|}^{1/2}$, we see that, equivalently, $\mathbf{\hat X}$ is close to $\mathbf{X Q_X\inv}$.  Unfortunately, the subspace nonidentifiability is still relevant here; recall that the matrix $\mathbf{XQ_X\inv}$  is defined only up to repeated eigenvalues.  Hence, the two forms of nonidentifiability may come into play depending on the inference task at hand when using the estimate $\mathbf{\hat X}$.

Finally, we want to remark that a novel result of this paper is characterizing the matrix $\mathbf{Q_X}$; that is, we can actually characterize $\mathbf{Q_X}$ in the limit; furthermore, the limit is unique up to an orthogonal transformation in the repeated eigenvalues of $\E(XX\t)\ipq$.

\subsection{Examples}
Before moving on, we pause briefly to situate all of our examples in the setting described above.  

\subsubsection{Spectral Graph Clustering}
Recall that the in the spectral graph clustering problem, the practitioner wishes to cluster the nodes of the graph.  We will assume that the practitioner uses the adjacency spectral embedding to cluster the vertices. 
In this problem, neither form of nonidentifiability is of inferential consequence for $n$ sufficiently large. For example, if we suppose that the random graph is an instatiation of a stochastic blockmodel whose $\mathbf{B}$ matrix has distinct eigenvalues, then, assuming that the points $\nu_k$ are well-separated, the points $\mathbf{Q_X\inv} \nu_k$ are also well-separated, although the distances may not be preserved exactly.  In addition, if we suppose that the matrix $\mathbf{B}$ does not have distinct eigenvalues, an orthogonal transformation arising from subspace nonidentifiability would not create additional separation between the points, since orthogonal transformations preserve distances.  Hence, since relative distances are essentially preserved, clustering the scaled eigenvectors using Gaussian mixture modelling would consistently recover the communities.


\subsubsection{Two-Graph Hypothesis Testing}
\label{twograph}
In this setting the practitioner observes two graphs $\mathbf{(A_1, X)} \sim GRDPG(n,F_X)$ and $\mathbf{(A_2, Y)} \sim GRDPG(m,F_Y)$ with common signature $(p,q)$.  Here one must contend with model-based nonidentifiability since to pass to the distribution $F_X$ using the empirical distribution of the $\hat X_i$'s, one must note that the $X_i$'s are identifiable only up to model-based nonidentifiability.  Therefore, the test is equivalent to testing the hypotheses 
\begin{align*}
    H_0: F_X = F_Y \circ \mathbf{T} \\
    H_A: F_X \neq F_Y \circ \mathbf{T}
\end{align*}
where $\mathbf{T} \in O(p,q)$.  If, in addition, the matrix $\mathbf{P}$ or the matrix $(\E XX\t)\ipq$ has repeated eigenvalues, then there is also subspace nonidentifiability.

If we consider $F_X$ as and $F_Y$ as stochastic blockmodels where the $\mathbf{B}_{\{X,Y\}}$ matrix has negative eigenvalues, then the two-graph hypothesis test is equivalent to testing $(\mathbf{B}_X,\pi_X) = (\mathbf{B}_Y,\pi_Y)$ where $\pi_X$ and $\pi_Y$ are the probability vectors.  In this setting, there is neither model-based nor subspace nonidentifiability, since the matrix $\mathbf{B}$ in the stochastic blockmodel is unchanged by indefinite orthogonal transformations on the latent positions.  However, any test that considers testing the latent positions themselves must contend with model-based nonidentifiability and subspace nonidentifiability if there are repeated eigenvalues in the matrix $\mathbf{B}$.  

The stochastic blockmodel example is a little misleading, since the GRDPG model class is much broader than the stochastic blockmodel and includes distributions with more general notions of communities, such as the mixed membership or degree-corrected stochastic blockmodel.  Hence, to appropriately test within this very general family distributions, one may very well have to consider the empirical distributions of the $\hat X_i$, which requires considering model-based nonidentifiability and possibly subspace nonidentifiability.



\subsubsection{Subspace Estimation}
In the subspace estimation problem, we assume that the matrix $\mathbf{P = X} \ipq \mathbf{ X\t}$ is fixed.  In this setting, we suppose we observe $\mathbf{A}$ and wish to estimate the top $d$ eigenvectors of $\mathbf{P}$ using only the observation $\mathbf{A}$.   In general, linear algebraic properties of the matrix $\mathbf{P}$ conditional on the latent positions requires no knowledge of the latent positions or their generating mechanism, and hence there is no model-based nonidentifiability. However, there may be subspace
nonidentifiability depending on what one is interested in.  The recent surge of interest in entrywise eigenvector bounds for eigenvectors of $\mathbf{A}$ require a form of subspace nonidentifiability in the sense that one is attempting to quantify in what sense $\mathbf{U_P} \approx \mathbf{U_AW}$.  Here, the matrix $\mathbf{W}$ arises because of the representation of the basis $\mathbf{U_A}$ is not unique.

Subspace nonidentifiability can be done away with if one is only interested in estimating the span of the matrix $\mathbf{P}$.  Define the matrix $\Pi_{\mathbf{P}} := \mathbf{U_P U_P\t}$. Then estimating the span of the matrix $\mathbf{P}$ is equivalent to estimating the matrix $\Pi_{\mathbf{P}}$, which suffers from no subspace 
nonidentifiability provided the top $d$ eigenvalues of $\mathbf{P}$ are distinct from the bottom $n - d$ eigenvalues (which is the case when $\mathbf{P}$ is rank $d$). Hence, depending on how the
problem is posed, there may be subspace nonidentifiability to contend with.

Consider again the stochastic blockmodel conditional on the block assigments.  Then the subspace estimation problem is equivalent to estimating the span of $\Theta \mathbf{B} \Theta^T$, where $\Theta$ is the $n \times K$ assignment matrix satisfying $\Theta_{ij} = 1$ if vertex $i$ belongs to community $j$ and zero otherwise.  In the case one wishes to study the entrywise approximation of the eigenvectors of the observed graph to the true eigenvectors, one must contend with subspace nonidentifiability.

\section{Limiting Results}
\label{sec3}

In this section we characterize the limit of model-based nonidentifiability.  Throughout we assume $X_1, ..., X_n$ are a sequence of random variables.  Define $\mathbf{Q_X}$ as the matrix such that
\begin{align*}
    \mathbf{U_P |\Lambda_P|^{1/2}Q_X = X}
\end{align*}
which is guaranteed to exist by virtue of the generating mechanism, wherein $\mathbf{P = XI}_{p,q}\mathbf{X\t}$.  One may wonder if there is a way to quantify $\mathbf{Q_X}$ in some sense, since it may be the case that $||\mathbf{Q_X}|| \to \infty$ as $\mathbf{Q_X} \in O(p,q)$ need not have bounded spectral norm.  

As a first check, Theorem 9 in \cite{rubin-delanchy_statistical_2020} says that $||\mathbf{Q_X}||$ has spectral norm bounded almost surely.  The proof therein is purely a combination of repeated use of the law of large numbers and facts from linear algebra.  

We can actually say more about $\mathbf{Q_X}$.  Note that $\mathbf{U_P |\Lambda_P|}^{1/2}$, while arbitary from a statistical standpoint, is a distinct choice of $\mathbf{X}$ in a linear algebraic sense, so one may wonder whether $\mathbf{Q_X}$ is actually converging to a fixed linear transformation.  Indeed, under the distinct eigenvalues assumption, this is the case, whence we have the following theorem.  

\begin{theorem} \label{lem1}
Define $\mathbf{\Delta} := \E XX\t$, and define $\mathbf{Q_X}$ as before, and suppose that $\mathbf{\Delta} \ipq$ has distinct eigenvalues and is full-rank.  Then there exists a deterministic indefinite orthogonal matrix $\mathbf{\tilde Q}$ such that $\mathbf{Q_X} \to \mathbf{\tilde Q}$ almost surely. 
\end{theorem}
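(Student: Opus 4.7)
The plan is to realize $\mathbf{Q_X}\t$ as an eigenvector matrix of the random matrix $\tfrac{1}{n}\mathbf{X}\t\mathbf{X}\ipq$ and then invoke perturbation-theoretic continuity of the eigendecomposition under the distinct-eigenvalue hypothesis. From $\mathbf{X} = \mathbf{U_P}|\mathbf{\Lambda_P}|^{1/2}\mathbf{Q_X}$ combined with $\mathbf{P} = \mathbf{X}\ipq\mathbf{X}\t = \mathbf{U_P}\mathbf{\Lambda_P}\mathbf{U_P}\t$, one derives the indefinite orthogonality $\mathbf{Q_X}\ipq\mathbf{Q_X}\t = \ipq$ together with the similarity
\[
\tfrac{1}{n}\mathbf{X}\t\mathbf{X}\ipq \;=\; \mathbf{Q_X}\t\bigl(\tfrac{1}{n}\mathbf{\Lambda_P}\bigr)\bigl(\mathbf{Q_X}\t\bigr)^{-1}.
\]
Hence the columns of $\mathbf{Q_X}\t$ are eigenvectors of $\tfrac{1}{n}\mathbf{X}\t\mathbf{X}\ipq$, with corresponding eigenvalues on the diagonal of $\tfrac{1}{n}\mathbf{\Lambda_P}$.

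By the strong law of large numbers, $\tfrac{1}{n}\mathbf{X}\t\mathbf{X}\to\mathbf{\Delta}$ almost surely, so $\tfrac{1}{n}\mathbf{X}\t\mathbf{X}\ipq \to \mathbf{\Delta}\ipq$; moreover the diagonal of $\tfrac{1}{n}\mathbf{\Lambda_P}$ converges to the $d$ distinct eigenvalues $\lambda_1^{\ast},\ldots,\lambda_d^{\ast}$ of $\mathbf{\Delta}\ipq$ (recalling that the nonzero spectra of $\tfrac{1}{n}\mathbf{P}$ and $\tfrac{1}{n}\mathbf{X}\t\mathbf{X}\ipq$ coincide). Although $\mathbf{\Delta}\ipq$ is non-symmetric, it is diagonalizable via the similarity $\mathbf{\Delta}\ipq = \mathbf{\Delta}^{1/2}\mathbf{M}\mathbf{\Delta}^{-1/2}$ with $\mathbf{M} := \mathbf{\Delta}^{1/2}\ipq\mathbf{\Delta}^{1/2}$ symmetric: if $\mathbf{w}_k$ are orthonormal eigenvectors of $\mathbf{M}$ with eigenvalues $\lambda_k^{\ast}$, then $\mathbf{v}_k := \mathbf{\Delta}^{1/2}\mathbf{w}_k$ are the corresponding eigenvectors of $\mathbf{\Delta}\ipq$. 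Standard perturbation theory for simple eigenvalues (continuity of spectral projectors, or the Cauchy-integral representation $\frac{1}{2\pi i}\oint (zI-\mathbf{M})^{-1}\,dz$) then ensures that, after an unambiguous pairing of eigenvalues for large $n$, the $k$-th column $\mathbf{r}_k$ of $\mathbf{Q_X}\t$ satisfies $\mathbf{r}_k \to c_k\mathbf{v}_k$ almost surely for some scalar $c_k$.

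To pin down $c_k$, one uses the indefinite-orthogonality constraint, which forces $\mathbf{r}_k\t\ipq\mathbf{r}_k = \pm 1$ with sign prescribed by whether $k \le p$. A direct calculation gives
\[
\mathbf{v}_j\t\ipq\mathbf{v}_k \;=\; \mathbf{w}_j\t\mathbf{M}\mathbf{w}_k \;=\; \lambda_k^{\ast}\delta_{jk},
\]
so passing to the limit yields $c_k^2|\lambda_k^{\ast}|=1$, fixing $|c_k|$. The residual sign ambiguity is precisely the column-sign ambiguity of $\mathbf{U_P}$ and can be neutralized by a deterministic convention (e.g., first nonzero entry positive). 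Theorem~9 of \citet{rubin-delanchy_statistical_2020}, cited in the excerpt, supplies the almost-sure boundedness of $\|\mathbf{Q_X}\|$ needed to rule out escape of subsequences along the noncompact directions of $O(p,q)$. Setting $\mathbf{\tilde Q}\t := [\,c_1\mathbf{v}_1\mid\cdots\mid c_d\mathbf{v}_d\,]$ produces the deterministic limit, and the identities $\mathbf{v}_j\t\ipq\mathbf{v}_k = \lambda_k^{\ast}\delta_{jk}$ verify that $\mathbf{\tilde Q}\in O(p,q)$.

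The principal technical obstacle is the non-symmetry of $\tfrac{1}{n}\mathbf{X}\t\mathbf{X}\ipq$: one cannot appeal directly to the spectral theorem, so eigenvector continuity must be funneled through the symmetric similar matrix $\mathbf{M}$ (whose distinct eigenvalues are inherited by similarity from the hypothesis on $\mathbf{\Delta}\ipq$) or through Kato-style resolvent perturbation. The remaining bookkeeping---correct pairing between $\tfrac{1}{n}\mathbf{\Lambda_P}$ and $\{\lambda_k^{\ast}\}$ for large $n$, and a consistent sign convention for $\mathbf{U_P}$---is routine once the limiting eigenvalues are distinct.
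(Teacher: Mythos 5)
Your proposal is correct and takes essentially the same route as the paper: both arguments reduce to the law of large numbers for $\mathbf{X}^{\top}\mathbf{X}/n$ plus eigenvector perturbation for the symmetric matrix $\mathbf{\Delta}^{1/2}\mathbf{I}_{p,q}\mathbf{\Delta}^{1/2}$ (whose distinct eigenvalues are inherited from $\mathbf{\Delta}\mathbf{I}_{p,q}$), and both land on the same limit $\mathbf{\tilde Q} = |\mathbf{\tilde \Lambda}|^{-1/2}\mathbf{\tilde V}^{\top}\mathbf{\Delta}^{1/2}$. The only difference is bookkeeping: the paper derives the explicit factorization $\mathbf{Q_X} = |\mathbf{\Lambda_P}|^{-1/2}\mathbf{V}^{\top}(\mathbf{X}^{\top}\mathbf{X})^{1/2}$ and passes to the limit factor by factor, whereas you identify the columns of $\mathbf{Q_X}^{\top}$ as eigenvectors of $\mathbf{X}^{\top}\mathbf{X}\,\mathbf{I}_{p,q}/n$ and pin down their scale through the $O(p,q)$ normalization constraint.
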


Note that the convergence result is well-defined since these are $d\times d$ matrices and all norms are equivalent in this setting.

\begin{proof} First, 
if $v$ is any vector satisfying $$\mathbf{ (X\t X)}^{1/2}  \ipq \mathbf{ (X\t X)}^{1/2}  v = \mathbf{\tilde \lambda} v$$ then $u =\mathbf{ X(X\t X)}^{-1/2} v$ satisfies the equation $\mathbf{X } \ipq \mathbf{ X\t }u =\mathbf{ \tilde \lambda} u$, where invertibility of $(\mathbf{X\t X})$ is guaranteed by the law of large numbers and the full-rankedness assumption on $\mathbf{\Delta} \ipq$.  To see this, simply plug in the definition of $u$ to derive
\begin{align*}
    \mathbf{X} \ipq \mathbf{ X\t X (X\t X)}^{-1/2} v = \tilde \lambda \mathbf{X (X\t X)}^{-1/2} v.  
\end{align*}
Multiplying through by $\mathbf{X\t}$ gives that
\begin{align*}
    \mathbf{X\t X} \ipq \mathbf{ X\t X (X\t X)}^{-1/2} v &= \tilde \lambda \mathbf{X\t X (X\t X)}^{-1/2} v \\
    &= \tilde \lambda \mathbf{ (X\t X)}^{1/2} v,
\end{align*}
and multiplying on the left by $\mathbf{(X\t X)}^{-1/2}$ recovers the original eigenvalue equation for $v$.  

Hence, suppose $\mathbf{Q_X}$ is the matrix such that
\begin{align*}
\mathbf{U_P |\Lambda_P|}^{1/2}\mathbf{ Q_X = X}.
\end{align*}
Let $\mathbf{V}$ be the matrix whose columns are the vectors $v$ defined above.  Rewriting in terms of $\mathbf{V}$, we see that $\mathbf{Q_X}$ is of the form
\begin{align}
\mathbf{Q_X} &= \mathbf{|\Lambda_P|}^{-1/2}\mathbf{ V\t (X\t X)}^{1/2} \label{eq1} \\
&=  \left(\frac{\mathbf{|\Lambda_P|}}{n}\right)^{-1/2} \mathbf{V\t \left(\frac{X\t X}{n}\right)}^{1/2}. \nonumber
\end{align}
Equation \eqref{eq1} can be seen as follows.  Note that $\mathbf{U_P}$ is the matrix of eigenvectors of $\mathbf{X} \ipq \mathbf{X\t}$, and the observation above shows that $\mathbf{U_P = X (X\t X)}^{-1/2}\mathbf{ V}$, where $\mathbf{V}$ is the matrix of eigenvectors of $\mathbf{(X\t X)}^{1/2}  \ipq \mathbf{ (X\t X)}^{1/2}$.  Using the fact that $\mathbf{U_P}$ is an $n \times d$ matrix of orthonormal columns, multiplying through by $\mathbf{U_P\t}$ and substituting our formula for $\mathbf{U_P}$ gives that
\begin{align*}
    \mathbf{|\Lambda_P|}^{1/2} \mathbf{Q_X} &= \mathbf{ U_P\t X} \\
    &= \mathbf{V\t (X\t X)}^{-1/2} \mathbf{X\t X} \\
    &= \mathbf{V\t (X\t X)}^{1/2}.
\end{align*}
Multiplying through by $\mathbf{|\Lambda_P|}^{-1/2}$ on the left gives Equation \eqref{eq1}.

Now, recall that $(\mathbf{\Delta})\ipq$ is assumed to have distinct eigenvalues, so that in addition $$ (\E X X\t)^{1/2} \ipq (\E X X\t)^{1/2}$$ has distinct eigenvalues, since the eigenvalues of $\mathbf{AB}$ are the same as the (nonzero) eigenvalues of $\mathbf{BA}$. Hence, viewing the matrix $$\left(\frac{1}{n}\mathbf{X\t X}\right)^{1/2} \ipq  \left(\frac{1}{n}\mathbf{X\t X}\right)^{1/2}$$ as a perturbation of the matrix $$ (\E X X\t)^{1/2} \ipq (\E X X\t)^{1/2},$$ we are free to apply the Davis-Kahan Theorem to each eigenvector $\mathbf{v_i}$ individually to see that they are each individually (up to sign) converging to the eigenvectors of $ (\mathbf{\Delta})^{1/2} \ipq (\mathbf{\Delta})^{1/2}$, which are fixed.  Hence, the matrix $\mathbf{V}$ is converging to a fixed (up to sign) matrix almost surely.

A similar, though slightly less involved analysis says that both $\frac{\mathbf{|\Lambda_P|}}{n}$ and $\frac{\mathbf{X\t X}}{n}$ are converging to fixed matrices by the Law of Large Numbers.  Since each term in \eqref{eq1} is converging to a constant, $\mathbf{Q_X}$ must also be converging almost surely to a constant matrix, say $\mathbf{\tilde Q}$.  The fact that $\mathbf{\tilde Q}$ is indefinite is immediate from the equation $\mathbf{Q_X } \ipq \mathbf{ Q_X\t }= \ipq$, which always holds.
\end{proof}

Note that the above proof can be modified in the case of repeated eigenvalues; one need only note that $\mathbf{V}$ is converging to some matrix $\mathbf{\tilde V}$ up to orthogonal transformation in the repeated eigenvalues (subspace nonidentifiability). We give this as a corollary.  

\begin{corollary}\label{repeatedeigs}
Suppose $\mathbf{Q_X}$ is the matrix such that $\mathbf{U_P |\Lambda_P|^{1/2}Q_X = X}$.  Then there exist a sequence of matrices $\mathbf{W_n} \in O(d) \cap O(p,q)$ and a deterministic matrix $\mathbf{\tilde Q}$ such that
\begin{align*}
    ||\mathbf{W_n Q_X - \tilde Q}  || \to 0
\end{align*}
almost surely.
\end{corollary}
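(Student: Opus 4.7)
The plan is to mimic the proof of Theorem \ref{lem1}, replacing the vector-wise Davis-Kahan alignment of $\mathbf{V}$ with its subspace counterpart. I would first re-derive the identity
\[
\mathbf{Q_X} \;=\; \left(\frac{\mathbf{|\Lambda_P|}}{n}\right)^{-1/2} \mathbf{V}\t \left(\frac{\mathbf{X\t X}}{n}\right)^{1/2},
\]
with $\mathbf{V}$ any orthonormal eigenvector matrix of $(\mathbf{X\t X})^{1/2}\ipq(\mathbf{X\t X})^{1/2}$, ordered so that the first $p$ columns correspond to positive eigenvalues and the last $q$ to negative ones. The two outer factors still converge almost surely to deterministic limits $\mathbf{D}_\infty^{-1/2}$ and $\mathbf{\Delta}^{1/2}$ by the law of large numbers and continuity of the matrix square root on positive-definite matrices; the distinct-eigenvalue assumption plays no role in this part.

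For $\mathbf{V}$ itself I would invoke the subspace form of the Davis-Kahan theorem. For each distinct eigenvalue $\mu$ of $\mathbf{\Delta}^{1/2}\ipq\mathbf{\Delta}^{1/2}$ of multiplicity $m_\mu$, the span of the $m_\mu$ empirical eigenvectors whose eigenvalues converge to $\mu$ converges almost surely, in sine-theta, to the limiting $\mu$-eigenspace. This produces an $m_\mu\times m_\mu$ orthogonal alignment $\mathbf{O}_n^{(\mu)}$ acting within that group of columns such that the rotated basis converges to a fixed basis of the limiting eigenspace. Assembling the $\mathbf{O}_n^{(\mu)}$ into a block-diagonal $\mathbf{O}_n$ and setting $\mathbf{W}_n := \mathbf{O}_n\t$ then yields $\mathbf{W}_n\mathbf{V}\t \to \tilde{\mathbf{V}}\t$ almost surely.

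The step I expect to require the most care is that $\mathbf{W}_n$ does not exactly commute with the diagonal factor $(\mathbf{|\Lambda_P|}/n)^{-1/2}$ at finite $n$. However, within each Davis-Kahan block the diagonal entries of $(\mathbf{|\Lambda_P|}/n)^{-1/2}$ all converge to the common value $|\mu|^{-1/2}$, so the commutator $[\mathbf{W}_n,(\mathbf{|\Lambda_P|}/n)^{-1/2}]$ vanishes almost surely. This lets me move $\mathbf{W}_n$ past the diagonal up to an $o(1)$ error and conclude
\[
\mathbf{W}_n\mathbf{Q_X} \;=\; \left(\frac{\mathbf{|\Lambda_P|}}{n}\right)^{-1/2}(\mathbf{W}_n\mathbf{V}\t)\left(\frac{\mathbf{X\t X}}{n}\right)^{1/2} + o(1) \;\longrightarrow\; \tilde{\mathbf{Q}} \text{ a.s.,}
\]
with $\tilde{\mathbf{Q}} := \mathbf{D}_\infty^{-1/2}\tilde{\mathbf{V}}\t\mathbf{\Delta}^{1/2}$ deterministic. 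To finish, I would verify $\mathbf{W}_n \in O(d)\cap O(p,q)$ by noting that every eigenspace of $(\mathbf{X\t X})^{1/2}\ipq(\mathbf{X\t X})^{1/2}$ corresponds to an eigenvalue of definite sign, so each block of $\mathbf{O}_n$ lies entirely within the first $p$ or the last $q$ coordinates; hence $\mathbf{W}_n = \text{diag}(\mathbf{W}_n^+,\mathbf{W}_n^-)$ with $\mathbf{W}_n^+ \in O(p)$ and $\mathbf{W}_n^- \in O(q)$, and such block-diagonal orthogonals are precisely the elements of $O(d)\cap O(p,q)$.
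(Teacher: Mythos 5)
Your proposal is correct and follows essentially the same route as the paper: the identity $\mathbf{Q_X}=(|\mathbf{\Lambda_P}|/n)^{-1/2}\mathbf{V}\t(\mathbf{X\t X}/n)^{1/2}$, a per-eigenspace Davis--Kahan alignment yielding a block-orthogonal $\mathbf{W_n}$, commutation of $\mathbf{W_n}$ with the (asymptotically) block-constant diagonal factor, and the limit $\mathbf{\tilde Q}=|\mathbf{\tilde\Lambda}|^{-1/2}\mathbf{\tilde V}\t\mathbf{\Delta}^{1/2}$. Your explicit treatment of the finite-$n$ commutator and the verification that $\mathbf{W_n}\in O(d)\cap O(p,q)$ are slightly more careful than the paper's, but not a different argument.
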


\begin{proof}
Simply note that the proof of Theorem \ref{lem1} reveals that without repeated eigenvalues, the matrix $\mathbf{V}$ is  defined only up to subspace nonidentifiability.  In particular,  we note that we can write
\begin{align*}
    \mathbf{Q_X := |\Lambda_P|}^{-1/2}\mathbf{ V\t (X\t X)}^{1/2},
\end{align*}
for some distinct choice of $\mathbf{V}$ up to repeated eigenvalues.  In addition, following the same logic as in the proof of Theorem \ref{lem1}, we see that applying the Davis-Kahan Theorem to the eigenvectors corresponding to each distinct eigenvalue gives a sequence of orthogonal matrices $\mathbf{W_n}$ such that $\mathbf{V}  - \mathbf{\tilde VW_n} \to 0$ almost surely, where $\mathbf{\tilde V}$ is defined as a distinct choice of the eigenvectors of the matrix
\begin{align*}
    (\mathbf{\Delta})^{1/2} \ipq (\mathbf{\Delta})^{1/2}.
\end{align*}
Furthermore, the orthogonal matrix $\mathbf{W_n}$ is block-orthogonal as it arises from the subspace nonidentifiability implicit with repeated eigenvalues.  Hence $\mathbf{W_n\t}$ is also block-orthogonal and therefore commutes with the diagonal matrix $\mathbf{ \tilde \Lambda}$ of eigenvalues of $ \mathbf{\Delta}^{1/2} \ipq \mathbf{\Delta}^{1/2}$.  Set 
\begin{align*}
    \mathbf{\tilde Q:= |\tilde \Lambda|}^{-1/2} \mathbf{\tilde V\t \Delta}^{1/2}.
\end{align*}
We see that
\begin{align*}
    ||\mathbf{W_n Q_X - \tilde Q} || &= ||\mathbf{Q_X - W_n\t \tilde Q}|| \\
    &= \bigg| \bigg| \mathbf{ |\Lambda_P|}^{1/2}\mathbf{ V\t (X\t X)}^{1/2} -\mathbf{ W_n\t |\tilde \Lambda|}^{-1/2}\mathbf{ \tilde V\t \Delta}^{1/2} \bigg| \bigg| \\
    &=\bigg| \bigg| \mathbf{ |\Lambda_P|}^{-1/2}\mathbf{ V\t (X\t X)}^{1/2} -\mathbf{|\tilde \Lambda|}^{-1/2} \mathbf{ W_n\t \tilde V\t \Delta}^{1/2} \bigg| \bigg| \\
    &=\bigg| \bigg| \mathbf{ |\Lambda_P|}^{-1/2} \mathbf{V\t (X\t X)}^{1/2} -\mathbf{|\tilde \Lambda|}^{-1/2}  \mathbf{( \tilde V W_n)\t \Delta}^{1/2} \bigg| \bigg|\\
    &=\bigg| \bigg|  \bigg|\frac{\mathbf{\Lambda_P}}{n}\bigg|^{-1/2} \mathbf{V\t}  \bigg( \frac{\mathbf{X\t X}}{n}\bigg)^{1/2} -\mathbf{|\tilde \Lambda|}^{-1/2} \mathbf{ ( \tilde V W_n)\t \Delta}^{1/2} \bigg| \bigg|
\end{align*}
which is seen to tend to zero almost surely by the law of large numbers and comparing each term.
\end{proof}

From the proof above, we derive an explicit form for the limiting matrix $\mathbf{\tilde Q}$, which we present as a corollary below.

\begin{corollary}\label{limitingform}
The limiting matrix $\mathbf{\tilde Q}$ defined in Theorem \ref{lem1} and Corollary \ref{repeatedeigs} must be of the form 
\begin{align*}
    \mathbf{\tilde Q =  |\tilde \Lambda|^{-1/2} \tilde V\t \Delta^{1/2}.}
\end{align*}
where $\mathbf{\tilde \Lambda}$ and $\mathbf{\tilde V}$ are the diagonal matrix and orthogonal matrix in the eigendecomposition of $\mathbf{\Delta}^{1/2} \ipq \mathbf{\Delta}^{1/2}$.  Here $\mathbf{\tilde Q}$ is defined only up to uniqueness of $\mathbf{\tilde V}$, which corresponds to repeated eigenvalues of $\mathbf{\Delta} \ipq.$
\end{corollary}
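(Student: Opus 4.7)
The plan is to observe that the explicit form of $\mathbf{\tilde Q}$ is essentially already constructed inside the proof of Corollary \ref{repeatedeigs}, so the content of this corollary is to identify the limit and verify that it matches the claimed formula. First, I would rewrite Equation \eqref{eq1} as
\begin{align*}
    \mathbf{Q_X} = \left(\frac{\mathbf{|\Lambda_P|}}{n}\right)^{-1/2} \mathbf{V}^{\top} \left(\frac{\mathbf{X}^{\top}\mathbf{X}}{n}\right)^{1/2},
\end{align*}
where the factors of $n$ cancel and each factor is a normalized quantity whose limiting behavior is already known from the arguments in Theorem \ref{lem1} and Corollary \ref{repeatedeigs}.

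Next, I would pass to the limit in each factor separately. By the Law of Large Numbers, $\tfrac{1}{n}\mathbf{X}^{\top}\mathbf{X} \to \mathbf{\Delta}$ almost surely, and by continuity of the matrix square root on positive semidefinite matrices, $(\tfrac{1}{n}\mathbf{X}^{\top}\mathbf{X})^{1/2} \to \mathbf{\Delta}^{1/2}$ almost surely. The discussion following Theorem \ref{kgthm}, combined with the fact that the nonzero eigenvalues of $\mathbf{P}/n = (\mathbf{X}/\sqrt{n})\mathbf{I}_{p,q}(\mathbf{X}/\sqrt{n})^{\top}$ agree with those of $\tfrac{1}{n}\mathbf{X}^{\top}\mathbf{X}\,\mathbf{I}_{p,q}$, and hence with those of $(\tfrac{1}{n}\mathbf{X}^{\top}\mathbf{X})^{1/2}\mathbf{I}_{p,q}(\tfrac{1}{n}\mathbf{X}^{\top}\mathbf{X})^{1/2}$, gives $\tfrac{1}{n}|\mathbf{\Lambda_P}| \to |\mathbf{\tilde \Lambda}|$ almost surely. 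Finally, viewing the perturbed matrix $(\tfrac{1}{n}\mathbf{X}^{\top}\mathbf{X})^{1/2}\mathbf{I}_{p,q}(\tfrac{1}{n}\mathbf{X}^{\top}\mathbf{X})^{1/2}$ as a perturbation of $\mathbf{\Delta}^{1/2}\mathbf{I}_{p,q}\mathbf{\Delta}^{1/2}$ and applying Davis–Kahan to each distinct-eigenvalue block gives $\mathbf{V} \to \mathbf{\tilde V}$ almost surely, modulo a block-orthogonal sequence $\mathbf{W_n}$ acting within the repeated-eigenvalue blocks.

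Combining these three limits through continuous mapping yields
\begin{align*}
    \mathbf{W_n Q_X} \;\to\; |\mathbf{\tilde \Lambda}|^{-1/2}\,\mathbf{\tilde V}^{\top}\mathbf{\Delta}^{1/2} \;=:\; \mathbf{\tilde Q}
\end{align*}
almost surely, which matches the form $\mathbf{\tilde Q}$ defined mid-way through the proof of Corollary \ref{repeatedeigs}. The remaining point is to record the nonuniqueness claim: since $\mathbf{\tilde V}$ is itself only determined up to a block-orthogonal transformation on each repeated-eigenvalue subspace of $\mathbf{\Delta}^{1/2}\mathbf{I}_{p,q}\mathbf{\Delta}^{1/2}$, and since such block-orthogonal matrices commute with $|\mathbf{\tilde \Lambda}|^{-1/2}$, the resulting $\mathbf{\tilde Q}$ is unique exactly up to multiplication on the left by these same block-orthogonal matrices.

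The only genuine subtlety, and where I would take the most care, is the bookkeeping between the sign/rotation ambiguity in $\mathbf{V}$ and that in $\mathbf{\tilde V}$: one must argue that the block-orthogonal sequence $\mathbf{W_n}$ produced by Davis–Kahan is precisely the one absorbed into the statement about uniqueness of $\mathbf{\tilde Q}$, so that the limiting formula is well-defined as an equivalence class of matrices modulo the subspace nonidentifiability. Beyond this, the argument reduces to continuous mapping applied to limits already established earlier in the section.
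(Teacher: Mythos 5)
Your proposal is correct and follows essentially the same route as the paper: the explicit form of $\mathbf{\tilde Q}$ is read off from Equation \eqref{eq1} by passing to the limit in each factor ($\tfrac{1}{n}\mathbf{X}^{\top}\mathbf{X}\to\mathbf{\Delta}$, $\tfrac{1}{n}|\mathbf{\Lambda_P}|\to|\mathbf{\tilde\Lambda}|$, and $\mathbf{V}\to\mathbf{\tilde V}$ up to block-orthogonal transformations via Davis--Kahan), exactly as in the proofs of Theorem \ref{lem1} and Corollary \ref{repeatedeigs}. Your closing remark on the bookkeeping of the block-orthogonal ambiguity in $\mathbf{\tilde V}$ matches the paper's uniqueness caveat, so nothing is missing.
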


Finally, indefinite orthogonality follows because
\begin{align*}
    \mathbf{\tilde Q } \ipq \mathbf{ \tilde Q\t} &= \mathbf{|\tilde \Lambda|}^{-1/2} \mathbf{\tilde V\t \bigg[\Delta}^{1/2}  \ipq \mathbf{ \Delta}^{1/2} \mathbf{\tilde V \bigg] |\tilde \Lambda|}^{-1/2}  \\
    &= \mathbf{|\tilde \Lambda|}^{-1/2} \mathbf{\tilde V\t \tilde V \tilde \Lambda |\tilde \Lambda|}^{-1/2}  \\
    &=  \mathbf{|\tilde \Lambda|}^{-1/2}\mathbf{ \tilde \Lambda |\tilde \Lambda|}^{-1/2}  \\
    &= \ipq
\end{align*}
where we used the eigenvector equation for $\mathbf{\tilde V}$ and the fact that $$\mathbf{\tilde \Lambda = |\tilde \Lambda|}^{1/2}  \ipq \mathbf{ |\tilde \Lambda|}^{1/2}.$$




\subsection{Explanation} \label{4.1}
How does one make sense of the statement that the matrix arising from model-based nonidentifiability is converging to a fixed matrix?  At first glance, such a result seems to contradict the very definition of (model-based) nonidentifiability. 


The answer comes from the fact that we have elected to use $\mathbf{U_P |\Lambda_P|}^{1/2}$ as our ``choice" of $\mathbf{X}$; as $n \to \infty$, the arguments in the proof of the theorem above show that $\mathbf{U_P |\Lambda_P|}^{1/2}$ is actually in some sense a ``consistent" choice, where the word ``consistent" means ``up to the fixed linear transformation $\mathbf{\tilde Q}$". Recall that we can't hope to recover an arbitrarily chosen $\mathbf{X}$; the above result, while somewhat theoretical, says that we are estimating $\mathbf{X}$ up to the deterministic linear transformation $\mathbf{\tilde Q}$.
 \begin{figure*}[t]
	\centering
\includegraphics[width=\textwidth]{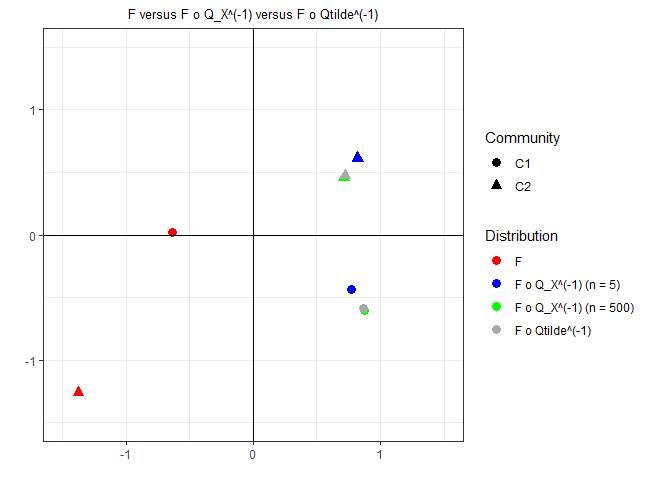}
	\caption{Illustration of the Convergence of $\mathbf{Q_X}$ to $\mathbf{\tilde Q}$ from Theorem \ref{lem1} and Corollary \ref{repeatedeigs} and the effect on the nonidentifiable distribution $F$.  See Section \ref{4.1}.}

	\label{fig:mainfig}
\end{figure*}

The intuition bears itself out in Figure \ref{fig:mainfig}.  Define $F$ to be the distribution that is point mass $\pi = (.4,.6)$ along the red triangle and red circle respectively; here $(p,q) = (-1,1)$, $d=2$.

The matrix $\mathbf{\tilde Q}$ is then completely determined by the distribution $F$ as per Corollary \ref{limitingform}, and hence we can calculate it directly.  The transformed distribution $F \circ \mathbf{\tilde Q}\inv$ is shown in grey, and the shape of the point corresponds to which original point it belongs to.  Note that although indefinite inner products are preserved, the distances are not preserved, meaning that $\mathbf{\tilde Q}$ is a bona fide indefinite orthogonal matrix.  From Figure \ref{fig:mainfig}, we see that although $F$ is nonidentifiable, the convergence of the matrix $\mathbf{Q_X}$ to $\mathbf{\tilde Q}$ means that the model-based nonidentifiability is controllable.
 
To generate $\mathbf{Q_X}$, we first sample $n = 5$ and then $n = 500$ points independently from $F$.  Then $\mathbf{Q_X}$ is completely determined by these the respective samples, and we show the distribution $F \circ \mathbf{Q_X}\inv$ in dark blue and light green respectively for each sample.  Again the shape corresponds to the latent position.  In the first sample, there are $n_1 = 3$ and $n_2 = 2$ of each latent position, and there are $n_1 = 192$ and $n_2 = 308$ in the second sample.
 
The original distribution $F$ is clearly separated away from $F \circ \mathbf{\tilde Q}\inv$, but the distributions $F \circ \mathbf{Q_X}\inv$ and $F \circ \mathbf{\tilde Q}\inv$ are close for $n =5$ and extremely close for $n = 500$, as per the convergence of $\mathbf{Q_X}$ to $\mathbf{\tilde Q}$.
 
In addition, the picture illustrates the finite-sample bias induced by the convergence of $\mathbf{Q_X}$ to $\mathbf{\tilde Q}$; that is, the limiting distribution of $\mathbf{\hat X}$ is Gaussian mixture about the two grey points, but for any finite sample the distribution will be approximately a Gaussian mixture about the blue or light green points.  Furthermore, this example shows that the finite-sample bias is completely determined by the randomness in the distrution $F$; when $F$ is a mixture of point masses as in this example, the randomness corresponds to the variability incurred by picking each latent position with some probability $\pi_i$.  Equivalently, the randomness in $\mathbf{Q_X}$ is completely determined by the random vector $N = (n_1, ..., n_K)$, where $n_i$ denotes the number of latent positions with value $\nu_i$.

\subsection{Relation to \cite{lei_network_2018}}
In \citet{lei_network_2018}, the author shows that the only (model-based) nonidentifiability one need consider is orthogonal nonidentifiability.  His result (Theorem 3.4), while true, assumes that the covariance (operator) of $X$ has block-diagonal structure with respect to the basis chosen; in other words, there is no covariance between the positive and negative components.  Specialized to our setting, in which the vectors live in a finite-dimensional space, this exactly says that the covariance matrix is block-diagonal with $p \times p$ and $q \times q$ blocks.  In the case of correlated positive and negative coordinates, Theorem \ref{lem1} simplifies the types of nonidentifability incurred by the model.

\section{Applications}
\label{sec4}
Theorem \ref{lem1} immediately gives some corollaries, the first of which is a result concerning $U$-statistics of \citet{levin_bootstrapping_2019}, while the other is a limiting covariance for $\mathbf{\hat X}$. For ease of presentation, the results are presented assuming $\mathbf{\Delta}\ipq$ has distinct eigenvalues, but the repeated eigenvalues case is handled similarly with appropriate treatment of orthogonal matrices.


\subsection{$U$-statistics}
The conclusion of Theorem \ref{lem1} allows us to make a simplification in the case of the $U$-statistics of \citet{levin_bootstrapping_2019}.  In that paper, the authors assume that the kernel of the $U$ statistic $h$ is invariant to orthogonal transformations in the sense that $h(x_1, ... , x_r) = h(\mathbf{Q} x_1, ..., \mathbf{Q} x_r)$ for orthogonal matrices $\mathbf{Q}$.  Unfortunately, in the case of the generalized random dot product graph, what is actually needed is invariance to \emph{indefinite} orthogonal transformations, which is not necessarily true for a generic $U$-statistic (including radial $U$-statistics).  Cases in which this wouldn't hold are, for example, the moments of the distribution $F_X \circ\mathbf{\tilde Q\inv}$
In other words, one can use the estimate $\mathbf{\hat X}$ to estimate \emph{population-level parameters} of the distribution $F_X \circ \mathbf{\tilde Q\inv}$, which, assuming one observes the adjacency matrix $\mathbf{A}$, is precisely the distribution one may be interested in.

In summary, the benefit of Theorem \ref{lem1} is we can 1) relax the assumption of invariance to orthognal and indefinite orthogonal transformations as required in \citet{levin_bootstrapping_2019}, and 2) no longer require diagonal covariance matrix structure as the extension with respect to \citet{lei_network_2018} would require.    Corollary \ref{ustat} makes this rigorous.  

\begin{corollary}\label{ustat}
Suppose we have a function $h: (\R^d)^r \to \R$, symmetric in its arguments.  Suppose further that $||\nabla^2 h || < \infty$ on $\mathcal{X}$.  Define the $U$-statistics
\begin{align*}
    \hat{U}_{n} &=\left(\begin{array}{c}{n} \\ {r}\end{array}\right)^{-1} \sum_{1 \leq i_{1}<i_{2}<\cdots<i_{r} \leq n} h\left(\hat{X}_{i_{1}}, \hat{X}_{i_{2}}, \ldots, \hat{X}_{i_{r}}\right) \\
    {U}_{n} &=\left(\begin{array}{c}{n} \\ {r}\end{array}\right)^{-1} \sum_{1 \leq i_{1}<i_{2}<\cdots<i_{r} \leq n} h\left(\mathbf{\tilde Q\inv}{X}_{i_{1}} ,\mathbf{\tilde Q\inv} {X}_{i_{2}}, \ldots, \mathbf{\tilde Q\inv} {X}_{i_{r} }\right)
\end{align*}
and suppose that $U_n$ is nondegenerate.  Then $\sqrt n( \hat U_n - U_n) \to 0$ almost surely.
\end{corollary}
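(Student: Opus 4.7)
The plan is to adapt the Taylor-expansion argument of \citet{levin_bootstrapping_2019}, using $\mathbf{\tilde Q}\inv X_i$ as the effective ``true'' latent positions in place of $X_i$ so that no (indefinite) orthogonal invariance of $h$ is needed.  For each $r$-tuple $(i_1,\ldots,i_r)$ I would apply a second-order Taylor expansion of $h$ about $(\mathbf{\tilde Q}\inv X_{i_1},\ldots,\mathbf{\tilde Q}\inv X_{i_r})$, writing
\begin{align*}
h(\hat X_{i_1},\ldots,\hat X_{i_r}) - h(\mathbf{\tilde Q}\inv X_{i_1},\ldots,\mathbf{\tilde Q}\inv X_{i_r}) = \sum_{j=1}^{r} \nabla_j h \cdot \delta_{i_j} + R_{\vec i},
\end{align*}
where $\delta_i := \hat X_i - \mathbf{\tilde Q}\inv X_i$, each $\nabla_j h$ is evaluated at $(\mathbf{\tilde Q}\inv X_{i_1},\ldots)$, and the Taylor remainder satisfies $|R_{\vec i}| \leq \tfrac{1}{2}\|\nabla^2 h\|_\infty \sum_{j,k}\|\delta_{i_j}\|\,\|\delta_{i_k}\|$.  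Averaging over tuples splits $\hat U_n - U_n$ into a first-order term $T_1$ and a quadratic remainder $R_2$.

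I would then decompose $\delta_i = (\hat X_i - \mathbf{Q_X}\inv X_i) + (\mathbf{Q_X}\inv - \mathbf{\tilde Q}\inv) X_i$.  The first summand is the standard adjacency-spectral-embedding residual, which by known two-to-infinity bounds satisfies $\max_i \|\hat X_i - \mathbf{Q_X}\inv X_i\| = O(\sqrt{\log n / n})$ almost surely; the second has norm at most $\|\mathbf{Q_X}\inv - \mathbf{\tilde Q}\inv\|\cdot\max_i\|X_i\|$, which tends to zero almost surely by Theorem \ref{lem1}.  Combining these with the uniform Hessian bound yields $\sqrt n\, R_2 = O(\log n/\sqrt n) + o(1) \to 0$ almost surely, so the quadratic remainder is already of lower order.

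The main work is showing $\sqrt n\, T_1 \to 0$ almost surely.  Splitting $T_1$ along the same decomposition of $\delta_i$, the piece involving $\hat X_i - \mathbf{Q_X}\inv X_i$ is what Levin et al.\ handle: substituting their linearization of the ASE in terms of the noise $\mathbf{E}$ turns this into a double average over tuples and noise entries that is $o(n^{-1/2})$ almost surely by the noise-concentration and degenerate-$U$-statistic machinery already in their paper (the nondegeneracy of $U_n$ is needed precisely here to ensure the first-order Hoeffding projection dominates).  The piece involving $(\mathbf{Q_X}\inv - \mathbf{\tilde Q}\inv) X_i$ factors as the deterministic (per sample) matrix $\mathbf{Q_X}\inv - \mathbf{\tilde Q}\inv$ multiplied by a $U$-statistic of the form $\binom{n}{r}\inv \sum (\nabla_1 h)\t X_{i_1}$, which converges by the strong law for $U$-statistics to a population limit controlled via the explicit expression for $\mathbf{\tilde Q}$ in Corollary \ref{limitingform}.

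The main obstacle I expect is this last piece.  The matrix $\mathbf{Q_X}\inv - \mathbf{\tilde Q}\inv$ is only $O(n^{-1/2})$ almost surely (via the LLN rate implicit in equation \eqref{eq1}), so obtaining $o(n^{-1/2})$ after multiplying by a $U$-statistic of order $1$ requires exploiting cancellation.  I would tackle this by exploiting the explicit form of $\mathbf{\tilde Q}$ in Corollary \ref{limitingform} to show that the relevant population mean $\E[(\nabla_1 h(\mathbf{\tilde Q}\inv X_1,\ldots,\mathbf{\tilde Q}\inv X_r))\t X_1]$ pairs with $\mathbf{Q_X}\inv - \mathbf{\tilde Q}\inv$ in such a way that the trace is a martingale-type object with mean zero and variance $o(1/n)$; concretely one expands $\mathbf{Q_X}$ in terms of $(\mathbf{X\t X}/n)^{1/2}$ and $\mathbf{V}$ as in the proof of Theorem \ref{lem1}, writes the residual as a centered sample average, and combines Davis-Kahan perturbation with a U-statistic CLT to extract the improved rate.
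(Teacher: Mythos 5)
Your decomposition is the same one the paper uses: split $\hat U_n - U_n$ into the piece comparing $\hat X_i$ to $\mathbf{Q_X}\inv X_i$, which is delegated to the machinery of \citet{levin_bootstrapping_2019}, and the piece comparing $\mathbf{Q_X}\inv X_i$ to $\mathbf{\tilde Q}\inv X_i$, which is controlled by a first-order Lipschitz-type bound of the form $C\,||\mathbf{X}||_{2,\infty}\,||\mathbf{\tilde Q}\inv - \mathbf{Q_X}\inv||$ combined with the convergence $\mathbf{Q_X}\to\mathbf{\tilde Q}$ from Theorem \ref{lem1}. The paper's proof is exactly this two-step argument and stops at the observation that the displayed bound tends to zero almost surely.

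Where you go beyond the paper is in noticing that the claim concerns $\sqrt n(\hat U_n - U_n)$, so the second piece must be $o(n^{-1/2})$, whereas $||\mathbf{Q_X}\inv - \mathbf{\tilde Q}\inv||$ is generically only of order $n^{-1/2}$ (up to iterated logarithms), being driven by the fluctuations of $\mathbf{X\t X}/n$ about $\mathbf{\Delta}$. That concern is legitimate, and the paper's own proof does not engage with it. However, your proposed resolution --- that the trace pairing of $\mathbf{Q_X}\inv - \mathbf{\tilde Q}\inv$ against $\E\big[X_1\,\nabla_1 h(\mathbf{\tilde Q}\inv X_1,\ldots,\mathbf{\tilde Q}\inv X_r)\t\big]$ is a mean-zero, variance-$o(1/n)$ object --- is asserted rather than argued, and it is not clear it can be made to work: the leading fluctuation of $\mathbf{Q_X}\inv - \mathbf{\tilde Q}\inv$ is itself a centered sample average of i.i.d.\ terms, so its trace against a fixed nonzero matrix is a centered random variable of exact order $n^{-1/2}$, which after multiplication by $\sqrt n$ converges to a nondegenerate Gaussian rather than to zero unless the pairing matrix vanishes or some special orthogonality between the two fluctuations holds. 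Establishing (or circumventing) that cancellation is the one genuinely missing step in your argument; without it, the bound you and the paper both write down delivers only the unscaled convergence $\hat U_n - U_n \to 0$, and the $\sqrt n$-scaled difference retains an extra Gaussian contribution coming from the sampling variability of $\mathbf{Q_X}$.
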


\begin{proof}
Note that for any multiindex $i_1, ..., i_r$,
\begin{align*}
   | h\left(\mathbf{\tilde Q\inv}{X}_{i_{1}} ,\mathbf{\tilde Q\inv} {X}_{i_{2}}, \ldots, \mathbf{\tilde Q\inv} {X}_{i_{r} }\right) &-  h\bigg(\mathbf{Q_X\inv}{X}_{i_{1}},\mathbf{Q_X\inv} {X}_{i_{2}}, \ldots, \mathbf{Q_X\inv} {X}_{i_{r} }\bigg) | \\
    \ \ \ \ \ &\leq  C \max_{i_c \in \{i_1, ..., i_r\}} || \mathbf{\tilde Q\inv}{X}_{i_{c}} -  \mathbf{Q_X\inv} {X}_{i_{c} } || \\
   &\leq C || \mathbf{X}||_{2,\infty} ||\mathbf{\tilde Q\inv - Q_X\inv}||
\end{align*}
where $C$ depends on $\nabla h$ and $r$. However, the right hand side tends to zero almost surely.  The rest of the result follows \emph{mutatis mutandis} from \citet{levin_bootstrapping_2019}, taking care to use the limiting $\mathbf{\tilde Q\inv}$ instead of $\mathbf{Q_X\inv}$.
\end{proof}




In the setting of Corollary \ref{limitingform}, the convergence is guaranteed by taking care to use block-orthogonal matrices $\mathbf{W_n}$.  Note that Corollary \ref{ustat} says that any nondegenerate $U$-statistic for the distribution $F_X$ will allow us to use $\mathbf{\hat X}$ in place of $\mathbf{X}$, and, furthermore, we \emph{do not require any invariance}.  We simply need a bounded second derivative.

The analysis above shows the distinction between noise incurred through the distribution $F$ and the Bernoulli noise incurred by the realization of the graph.  Theorem \ref{lem1} shows that noise from the random sampling of $X_1,..., X_n$ can be handled by effectively transforming the distribution $F$ to $F \circ \mathbf{\tilde Q}\inv$.

\subsection{Limiting Covariance}
We can also use Theorem \ref{lem1} to derive an additional corollary for the limiting covariance.  We first state Theorem 7 of \citet{rubin-delanchy_statistical_2020} in the notation of this paper. We denote $\Phi(y, \Sigma)$ as the cumulative distribution function for a normal random variable with mean zero and covariance $\Sigma$.

\begin{theorem}[\citet{rubin-delanchy_statistical_2020}] \label{thm7}
Suppose the settings of Theorem \ref{lem1} hold, and let $\xi \sim F_X$. Then there exists a sequence of indefinite orthogonal matrices $\mathbf{Q_n}$ such that
\begin{align*}
    \lim_{n \to \infty} \p \bigg( \sqrt n \big( \mathbf{Q_n} \hat X_i - X_i \big) \leq y \bigg) \to \int_{\Omega} \Phi(y, \Sigma(x)) dF_X(x),
\end{align*}
where $$\Sigma(x) := \mathbb{E}\left[\left(x^{\top}\ipq \xi\right)\left(1-x^{\top} \ipq \xi\right) \xi \xi^{\top}\right].$$
\end{theorem}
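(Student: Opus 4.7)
The plan is to combine a row-wise first-order perturbation expansion of the adjacency spectral embedding with a conditional multivariate central limit theorem, using Theorem \ref{lem1} to identify the limiting alignment matrix. First, I would exploit the eigenequation $\mathbf{A U_A} = \mathbf{U_A \Lambda_A}$ to rewrite $\mathbf{\hat X} = \mathbf{U_A |\Lambda_A|}^{1/2} = \mathbf{A U_A |\Lambda_A|}^{-1/2}\mathbf{S_A}$, where $\mathbf{S_A} := \operatorname{sgn}(\mathbf{\Lambda_A})$; by Weyl's inequality and the assumption that $\mathbf{\Delta}\ipq$ is full-rank with signature $(p,q)$, one has $\mathbf{S_A} = \ipq$ with high probability for $n$ large. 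The analogous noiseless identity $\mathbf{X} = \mathbf{U_P |\Lambda_P|}^{1/2}\mathbf{Q_X}$ suggests taking $\mathbf{Q_n}$ to be the indefinite orthogonal matrix matching these factorizations, a Procrustes-type alignment of $\mathbf{U_A}$ with $\mathbf{U_P}$.

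Second, I would establish a row-wise first-order expansion
\begin{align*}
\mathbf{Q_n \hat X_i} - \mathbf{X_i} \;=\; \bigl[(\mathbf{A - P})\mathbf{U_P |\Lambda_P|}^{-1/2}\bigr]_i \,\ipq\, \mathbf{Q_X}^{-1} + R_i,
\end{align*}
with remainder $R_i = o_P(n^{-1/2})$. This is the main technical obstacle. It requires sharp $\|\cdot\|_{2,\infty}$ perturbation bounds on $\mathbf{U_A - U_P W}$ for some block-orthogonal $\mathbf{W}$, in the spirit of \citet{cape_two--infinity_2019,abbe_entrywise_2017}. The cross terms in $R_i$, of type $(\mathbf{A - P})(\mathbf{U_A - U_P W})$, are handled by combining $\|\mathbf{A - P}\| = O(\sqrt{n})$ with row-wise eigenvector perturbation bounds of order $n^{-1}\operatorname{polylog}(n)$, then absorbing the factor of $n^{-1/2}$ from $|\mathbf{\Lambda_P}|^{-1/2}$.

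Third, conditional on $\mathbf{X}$, the leading term is a sum of $n$ independent, mean-zero, bounded random variables weighted by $\mathbf{X}$-measurable vectors. Substituting $\mathbf{U_P |\Lambda_P|}^{1/2} = \mathbf{X Q_X}^{-1}$, the $n$-scaled conditional covariance takes the form
\begin{align*}
\frac{1}{n}\sum_{j=1}^n (X_i\t \ipq X_j)\bigl(1 - X_i\t \ipq X_j\bigr)\, \mathbf{Q_X}^{-\top}\ipq \Bigl(\tfrac{\mathbf{X\t X}}{n}\Bigr)^{-1} X_j X_j\t \Bigl(\tfrac{\mathbf{X\t X}}{n}\Bigr)^{-1} \ipq \mathbf{Q_X}^{-1}.
\end{align*}
By the law of large numbers $\mathbf{X\t X}/n \to \mathbf{\Delta}$, by Theorem \ref{lem1} $\mathbf{Q_X} \to \mathbf{\tilde Q}$ almost surely, and using $\mathbf{\tilde Q \ipq \tilde Q\t} = \ipq$, the algebra collapses to $\Sigma(X_i)$ as stated. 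The multivariate Lindeberg CLT applied conditionally on $X_i$ then yields asymptotic normality with covariance $\Sigma(X_i)$.

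Finally, unconditioning on $X_i \sim F_X$ yields the scale-mixture cdf $\int_\Omega \Phi(y, \Sigma(x))\, dF_X(x)$ as the limiting distribution. The hardest single step will be the second: the remainder bound requires very fine spectral perturbation because one needs $o_P(n^{-1/2})$ control in a norm strong enough to survive right-multiplication by $|\mathbf{\Lambda_P}|^{-1/2}$, which effectively inflates errors by $n^{-1/2}$, and this is exactly the reason naive Davis--Kahan bounds alone do not suffice and finer entrywise machinery is required.
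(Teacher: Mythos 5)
This theorem is quoted from \citet{rubin-delanchy_statistical_2020}; the paper itself supplies no proof, so your proposal can only be measured against the argument in that reference. Your overall architecture is the correct one and is essentially the one used there: rewrite $\mathbf{\hat X}$ via the eigenequation, isolate a linear leading term in $\mathbf{A - P}$, kill the remainder with $\|\cdot\|_{2,\infty}$-type perturbation bounds, apply a conditional Lindeberg CLT, and then integrate out $X_i \sim F_X$. You also correctly identify why Davis--Kahan alone is insufficient for the $o_P(n^{-1/2})$ remainder.

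There are, however, two concrete problems in the final step. First, the covariance algebra does not ``collapse'' as you claim. Your leading term yields a limiting conditional covariance of the form $\mathbf{\tilde Q}^{-\top}\ipq \mathbf{\Delta}\inv\,\E[(x\t\ipq\xi)(1-x\t\ipq\xi)\xi\xi\t]\,\mathbf{\Delta}\inv\ipq\mathbf{\tilde Q}\inv$, and the identity $\mathbf{\tilde Q}\,\ipq\,\mathbf{\tilde Q}\t = \ipq$ removes neither the $\mathbf{\Delta}\inv$ factors nor the conjugation; the sandwich genuinely survives. Indeed, the paper's own Corollary \ref{cor7} sandwiches the same expectation by $\mathbf{R} = \mathbf{\tilde Q}\inv\ipq\mathbf{\Delta}\inv$, which is only consistent with Theorem \ref{thm7} if the latter's covariance is $\ipq\mathbf{\Delta}\inv\,\E[\cdots]\,\mathbf{\Delta}\inv\ipq$ (as it is in the cited source); asserting that your expression equals the bare expectation is an error, not a simplification. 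Second, the $\mathbf{Q_X}\inv$ factor in your leading term, and hence the appeal to Theorem \ref{lem1}, is out of place here: Theorem \ref{thm7} permits an arbitrary sequence $\mathbf{Q_n}$, so one should choose the alignment so that the leading term is row $i$ of $(\mathbf{A-P})\mathbf{X}(\mathbf{X\t X})\inv\ipq$, in which no $\mathbf{Q_X}$ appears at all. The convergence $\mathbf{Q_X}\to\mathbf{\tilde Q}$ is what upgrades Theorem \ref{thm7} to Corollary \ref{cor7} (replacing the unidentified $\mathbf{Q_n}$ by a deterministic limit); it is not needed to prove Theorem \ref{thm7} itself, and invoking it here conflates the two results. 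Fix the alignment and keep the $\ipq\mathbf{\Delta}\inv(\cdot)\mathbf{\Delta}\inv\ipq$ sandwich, and the rest of your outline goes through.
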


Straightforward application of the Continuous Mapping Theorem and Slutsky's Theorem with Theorem \ref{lem1} and Theorem 7 of \citet{rubin-delanchy_statistical_2020} allows one to arrive at an additional corollary.

\begin{corollary} \label{cor7}
Suppose the setting of Theorem \ref{lem1} holds, and let $\xi \sim F_X$.  Then

\begin{align*}
   \lim_{n \to \infty} \p \bigg( \sqrt{n} \big( \hat X_i - \mathbf{Q_X\inv} X_i \big) \leq y \bigg) \to \int_{ \Omega} \Phi(y, \Sigma(x)) dF_X(x)
\end{align*}
where 
\begin{align*}
    \Sigma(x) &:= \mathbf{R} \mathbb{E}\left[\left(x^{\top}\ipq \xi\right)\left(1-x^{\top} \ipq \xi\right) \xi \xi^{\top}\right] \mathbf{R\t}; \\
    \mathbf{R} &:= \mathbf{\tilde Q}\inv \ipq \mathbf{\Delta}\inv
\end{align*}
and $\mathbf{\tilde V}$ is the $d\times d$ orthogonal matrix in the eigendecomposition of $$\mathbf{\Delta}^{1/2} \ipq \mathbf{\Delta}^{1/2}.$$ In particular, when $F_X$ is a mixture of point masses, the rows of $\mathbf{\hat X}$ are approximately distributed as a mixture of Gaussians with explicit covariances given above.
\end{corollary}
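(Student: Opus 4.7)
The strategy is exactly what is advertised just before the corollary: a Slutsky plus continuous mapping argument layered on top of Theorem~\ref{thm7} and Theorem~\ref{lem1}.  Theorem~\ref{thm7} provides the CLT $\sqrt n(\mathbf{Q_n}\hat X_i - X_i)\Rightarrow$ a Gaussian mixture with conditional covariance $\Sigma_0(x) := \E[(x^{\top}\ipq\xi)(1-x^{\top}\ipq\xi)\xi\xi^{\top}]$, for an indefinite orthogonal sequence $\mathbf{Q_n}$ whose explicit form can be read off from the proof of Theorem~7 of \citet{rubin-delanchy_statistical_2020}.  The plan is to rearrange this into a statement about $\sqrt n(\hat X_i - \mathbf{Q_X\inv} X_i)$ by left-multiplying by a random matrix $\mathbf{M}_n$ whose almost-sure limit will turn out to be $\mathbf{R}$.

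Concretely, I would first write
\begin{equation*}
\sqrt n(\hat X_i - \mathbf{Q_X\inv} X_i) \;=\; \mathbf{M}_n \sqrt n(\mathbf{Q_n}\hat X_i - X_i) \;+\; \mathbf{r}_n,
\end{equation*}
where $\mathbf{M}_n$ is assembled out of $\mathbf{Q_X\inv}$, $n^{-1}\mathbf{\Lambda_P}$, $n^{-1}\mathbf{X\t X}$, and the spectral quantities that appear in the construction of $\mathbf{Q_n}$, and $\mathbf{r}_n$ is an asymptotically negligible remainder.  Theorem~\ref{lem1} together with Corollary~\ref{limitingform} gives $\mathbf{Q_X}\to\mathbf{\tilde Q}$ almost surely, the strong law of large numbers gives $n^{-1}\mathbf{X\t X}\to\mathbf{\Delta}$, and the associated eigenvalue convergence (via Theorem~\ref{kgthm} or a direct perturbation argument) gives $n^{-1}\mathbf{\Lambda_P}\to\mathbf{\tilde\Lambda}$.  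Composing these convergences via the continuous mapping theorem identifies $\mathbf{M}_n \to \mathbf{R} = \mathbf{\tilde Q\inv}\ipq\mathbf{\Delta\inv}$ almost surely; the factor $\ipq\mathbf{\Delta\inv}$ is exactly what drops out of inverting the product form of $\mathbf{Q_n}$ after the expected cancellations.

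With $\mathbf{M}_n$ converging almost surely and $\sqrt n(\mathbf{Q_n}\hat X_i - X_i)$ converging weakly (conditional on $X_i=x$) to $N(0,\Sigma_0(x))$, Slutsky's theorem upgrades the pair to joint convergence, and continuous mapping (matrix--vector multiplication is continuous) yields weak convergence of the product to $\mathbf{R}\,N(0,\Sigma_0(x)) = N(0,\mathbf{R}\Sigma_0(x)\mathbf{R\t})$; integrating against $F_X$ produces the stated integral representation.  The main obstacle I anticipate is purely bookkeeping: reading the explicit form of $\mathbf{Q_n}$ off the proof of Theorem~7 of \citet{rubin-delanchy_statistical_2020} carefully enough that the cancellations yield the clean limit $\mathbf{R}=\mathbf{\tilde Q\inv}\ipq\mathbf{\Delta\inv}$ rather than a cumbersome equivalent expression, and then verifying that the remainder $\mathbf{r}_n$ is $o_P(1)$ despite the $\sqrt n$ prefactor—which in turn amounts to promoting the almost-sure convergences used to identify $\mathbf{M}_n \to \mathbf{R}$ to $o_P(n^{-1/2})$ rates, using the same Davis--Kahan and LLN mechanics that drive the proof of Theorem~\ref{lem1}.
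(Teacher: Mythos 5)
Your proposal is correct and matches the paper's own (very terse) argument: the paper derives Corollary \ref{cor7} exactly by combining Theorem \ref{thm7} with the almost-sure convergence $\mathbf{Q_X}\to\mathbf{\tilde Q}$ from Theorem \ref{lem1} via Slutsky's theorem and the continuous mapping theorem, with the factor $\mathbf{R}$ absorbing both the limiting alignment $\mathbf{\tilde Q}^{-1}$ and the $\mathbf{I}_{p,q}\mathbf{\Delta}^{-1}$ sandwich from the original covariance in \citet{rubin-delanchy_statistical_2020}. One small simplification over your write-up: since the centering in the corollary is the random quantity $\mathbf{Q_X}^{-1}X_i$ rather than $\mathbf{\tilde Q}^{-1}X_i$, the decomposition is exact with no remainder $\mathbf{r}_n$, so almost-sure convergence of the multiplicative factor suffices and no $o_P(n^{-1/2})$ rate is needed.
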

If we examine Theorem \ref{thm7} and compare it to Corollary \ref{cor7}, we see that the benefit of our result is that it allows us to find the limiting distribution of the rows of $\mathbf{\hat X}$ as opposed to the limiting distribution of the rows $\mathbf{\hat X Q_n}$, where $\mathbf{Q_n}$ is possibly some unidentified indefinite orthogonal transformation.  In particular, the corollary above shows that the distribution of the $\hat X_i$'s is asymptotically a mixture of Gaussians about the $\mathbf{Q_X\inv} X_i$'s, or, equivalently, about $\mathbf{U_P |\Lambda_P|}^{1/2}$.  This shows that we are actually performing inference on a fixed linear transformation of $F_X$ itself.


\begin{remark}
Deriving the limiting distribution in the case of the Laplacian Spectral Embedding as in \citet{tang_limit_2018} is less straightforward, but should yield a similar result.
\end{remark}

\subsection{Examples}
Finally, we explain how the theory in the previous section applies to two of the examples we considered earlier.  

\subsubsection{Spectral Graph Clustering}
\label{graphclustering}
Again, assume the practitioner uses the adjacency spectral embedding to cluster the nodes of the graph.  Then Corollary \ref{covariances} shows that the limiting covariance of $\hat X_i$ is completely determined by the matrix $\mathbf{\tilde Q}$, the distribution $F_X$, and the fixed entry $\mathbf{\tilde Q\inv} X_i$.  In other words, clustering the scaled eigenvectors using Gaussian mixtures approximately recovers communities; the reason our result is novel is that it explictly characterizes the error,  since  error quantification for Gaussian mixtures is inextricably linked to and determined by the covariances of the mixture components. Whereas \emph{a priori} we could only say that Gaussian mixture modeling would approximately recover communities provided the points $\nu_k$ are well-separated, now, we can asymptotically quantify that error directly by considering the vectors $\mathbf{\tilde Q}\inv\nu_k$.

In Table \ref{covariances}, we examine the estimated covariances of the adjacency spectral embedding of the stochastic blockmodel with latent positions
\begin{align*}
    \nu_1 &\approx ( 0.903, -0.349 ,-0.306)\t; \\
    \nu_2 &\approx( 0.911,  0.421 ,-0.229)\t; \\
    \nu_3 &\approx ( 0.813, -0.052,  0.599)\t;
\end{align*}
with probabilities $.35,.35,$ and $.3$ respectively.  The above yields the (indefinite) probability matrix
\begin{align*}
    \mathbf{B}: = \begin{pmatrix} .6 & .9 & .9 \\ .9 & .6 & .9 \\ .9 & .9 & .3\end{pmatrix}
\end{align*}
as is studied in \citet{rubin-delanchy_statistical_2020}.  We simulate a random adjacency matrix according to the above model for $n \in \{2000,8000\}$.  
We illustrate our results with a finite sample by estimating the covariance within each community.  We estimate the covariances assuming the block assignments are known and comparing the resulting $\hat \Sigma_i$'s to the corresponding $\Sigma(\nu_i)$ as given in Corollary \ref{cor7}.  Note that Corollary \ref{cor7} does not give a conditional limiting covariance, but we included individual conditional limiting covariances above for illustration purposes. 


The simulations give further evidence to the fact that the limiting densities are eliptical, supporting that one should use the Gaussian mixture-modeling as opposed K-means, as first pointed out in \citet{tang_limit_2018} and in \citet{rubin-delanchy_statistical_2020} for the indefinite case.

\begin{table*}
\label{covariances}
\resizebox{\columnwidth}{!}{%
\begin{tabular}{crrc}
\hline
$n$ &  \multicolumn{1}{c}{2000} & \multicolumn{1}{c}{8000} & \multicolumn{1}{c}{$\infty$} \\
 \hline    \\ $\hat \Sigma_1$ 
 & $ 
 \begin{pmatrix}
 0.201 & 0.278 & 0.121 \\
 0.278 & 1.861 & 0.358 \\
 0.121 & 0.358 & 0.676 \end{pmatrix}   $
 &   $ \begin{pmatrix} 
 0.185& 0.225& 0.081 \\
 0.225& 1.644& 0.337\\
 0.081& 0.337& 0.745 \end{pmatrix} $
 &  $ \begin{pmatrix}             
 0.189& 0.207 &0.140\\
 0.207& 1.421& 0.448\\
 0.140& 0.448& 0.845 \end{pmatrix} $  \\\\
   \hline \\  $\hat \Sigma_2$ & 
   $ \begin{pmatrix} 
  0.209 & -0.267& 0.128\\
  -0.267&  1.589& -0.338\\
 0.128& -0.338 & 0.718 \end{pmatrix} $
   & $ \begin{pmatrix} 
  0.183 &-0.226 & 0.091\\
 -0.226 & 1.576& -0.290\\
  0.091& -0.290 & 0.681 \end{pmatrix} $
   & $ \begin{pmatrix}  
  0.186& -0.235 & 0.049\\
 -0.235&  1.652 &-0.105\\
  0.049 &-0.105 & 0.611 \end{pmatrix} $ \\\\
  \hline  \\ $\hat \Sigma_3$ &
  $ \begin{pmatrix} 
  0.156& 0.024& - 0.121\\
 0.024 & 0.901 &-0.017\\
  -0.121& -0.017 & 0.927
  \end{pmatrix} $
  
  & $ \begin{pmatrix} 
  0.158 & 0.004& -0.156\\
 0.004 & 0.864 &-0.028\\
-0.156& -0.028&  1.129\end{pmatrix} $ 
  & $ \begin{pmatrix} 
  0.157 & 0.033& -0.147\\
  0.033&  0.868 &-0.052\\
 -0.147 &-0.052 & 1.110 \end{pmatrix} $
  \\\\
  \hline
\end{tabular}
}
\caption{Empirical covariance assuming known block assignments.  The last column represents the theoretical covariance.  The simulation is described in Section \ref{graphclustering}. 
}
\end{table*}

\subsubsection{Two-Graph Hypothesis Testing}
Recall the practitioner observes two graphs $(\mathbf{A_1, X}) \sim GRDPG(n,F_X)$ and $(\mathbf{A_2, Y}) \sim GRDPG(m,F_Y)$, where the respective second moment matrices times $\ipq$ have distinct eigenvalues.  Suppose one is willing to assume a parametric distribution and use a $U$-statistic as above;  then the two-graph hypothesis testing problem admits a consistent test immediately.  

To be more explicit, suppose one assumes some parametric distribution $\mathcal{P}_{\theta}$ where $\theta \in \R^r$ for some fixed $r$, and suppose $X_1, ..., X_n$ are latent positions from $\mathcal{P}_{\theta}$ and similarly for $Y_1, ..., Y_n$.  Let $X$ and $Y$ denote generic independent random variables drawn from $F_{X}$ and $F_Y$, and let $\theta(F_X)$ and $\theta(F_Y)$ denote the parameters corresponding to $F_X$ and $F_Y$ respectively.  

Recall that the stochastic blockmodel is a simple case of the above, where $\theta(F_X) = (\nu,\pi)$, where $\nu_1, ...,\nu_K$ are the latent positions and $\pi_1, ...,\pi_K$ are the point masses, but our discussion in section \ref{twograph} shows that the stochastic blockmnodel is a little misleading.  The mixed-membership stochastic blockmodel is also a case of this parametric construction by considering the endpoints $\nu_1, ..., \nu_K$.  One can easily come up with even more general distributions satisfying the parametric assumption by fixing $\nu_1, ..., \nu_K$ and putting any multivariate $\beta$ distribution on the convex hull.

From this parametric construction, it is clear that testing $F_X = F_Y \circ \mathbf{T}$ is equivalent to testing $\theta(F_X) = \theta(F_Y \circ \mathbf{T})$ for any $\mathbf{T} \in O(p,q)$.  Suppose $\hat \theta$ is a $U$-statistic for $\theta$.  Then, by Corollary \ref{ustat}, $\hat \theta(\mathbf{\hat X})$ will also be consistent in the sense that $\hat \theta(\mathbf{\hat X}) - \theta( F_X \circ \mathbf{\tilde Q}\inv) \to 0$.  Hence, under the null hypothesis, we see that $\hat \theta(\mathbf{\hat X}) \approx \hat \theta(\mathbf{\hat Y})$.  If one has a consistent test for the test $\theta_1 = \theta_2$, then one immediately obtains consistency.  Distributional results would not necessarily generalize immediately, since the  limiting distribution may not be distribution-free.   Other non-U-statistic tests could be similarly analyzed using, for example, the delta method coupled with Theorem \ref{lem1}.  

From this discussion, we see that without subspace nonidentifiability, any parametric consistent two-sample test is immediately consistent for the two-graph hypothesis test by applying the test to the latent positions directly.  Hence, even though \emph{a priori} one need consider model-based nonidentifiability for this test, our limiting results and this discussion show that it can be disregarded when considering first-order asymptotics.  Note that one cannot disregard subspace nonidentifiability in this case, since the above argument depends on the fact that $\mathbf{\tilde Q}$ is unique, which is not the case in general.  However, subspace nonidentifiability, taking the form of orthogonal matrices, is already much better-behaved than model-based nonidentifiability.
Extending these consistency results to nonparametric tests and accounting for subspace nonidentifiability is the subject of ongoing research work.

\section{Conclusion}
We have highlighted a phenomenon in latent position random graph inference.  Recall that subspace nonidentifiability stems primarily from the choice of orthonormal basis corresponding to repeated eigenvalues, whereas model-based nonidentifiability arises due to the (statistically) arbitrary choice of $\mathbf{X}$ in the probability matrix $\mathbf{P}$. In practice, subspace nonidentifiability is often overcome simply by recognizing that the practitioner can be interested only  in the span of the subspace corresponding to the estimated eigenvectors; in this case the problem reduces to a subspace alignment problem where one need focus  only on projection matrices.  However, in the latent space random graph framework, this subspace nonidentifiability implies that the arbitrary subspace choice cannot be overcome.

We have shown that the scaled eigenvectors for random graphs are asymptotically normal, with explicit covariance given in Corollary \ref{cor7}.  The primary difference between our results and previous results is that previous results were determined up to an indefinite orthogonal transformation, that was, in general, not well-understood.  Here we explicitly characterize the limiting distribution with and without subspace nonidentifiability, and we show how the dependence on the orthogonal matrix in the subspace nonidentifiability manifests itself in terms of limiting results.

In the case of model-based nonidentifiability, assuming a spectrally-informed ``guess" of $\mathbf{X}$, we can see from  Theorem \ref{lem1} that the matrix $\mathbf{Q_X}$ in the model-based nonidentifiability is well-behaved in the sense that it is converging to a fixed matrix.  Although this does not eliminate consequences of our model-based nonidentifiability, it does provide some reassurance that using a linear-algebra informed estimator of $\mathbf{X}$ could be useful.  In particular, assuming knowledge of the distribution $F_X$, the limiting covariance matrix is explicitly calculable.  

\section*{Acknowledgements}
The authors would like to thank Zachary Lubberts for productive discussions regarding the examples in Section \ref{sec4}.

\bibliography{SLNI.bib}


\end{document}